\newcommand{\MF}{\mathcal{F}}
\newcommand{\MC}{\mathcal{C}}
\newcommand{\cA}{\mathcal{A}}
\newcommand{\cP}{\mathcal{P}}
\newcommand{\Qro}{{\sf Q}}
\newcommand{\Pro}   {{\sf P}}
\newcommand{\Exp}  {{\sf E}}
\newcommand{\bN}{\mathbb{N}}
\newcommand{\Var}  {{\sf Var}}
\newcommand{\luName}{\text{gap-intersection }}
\newtheorem{theorem}{Theorem}[section]
\newtheorem{lemma}[theorem]{Lemma}
\newtheorem{corollary}[theorem]{Corollary}
\theoremstyle{remark}
\newtheorem{remark}{Remark}[section]
\begin{document}

\begin{frontmatter}

\title{Asymptotically optimal, sequential, multiple testing procedures with  prior information on the number of signals}

% \thanksref{t1}}
%\thankstext{t1}{This work  was supported by the Division of Computing and Communication Foundations through the U.S. National Science Foundation under Grant 1514245 and in part by the Simons Foundation under Grant C3663.}

%\title{Multiple testing procedures with  prior information on the number of signals}
%\title{Asymptotically optimal sequential procedures in multiple testing with familywise error control and prior information}

\runtitle{Sequential Multiple Testing}

% indicate corresponding author with \corref{}
% \author{\fnms{John} \snm{Smith}\corref{}\ead[label=e1]{smith@foo.com}\thanksref{t1}}
% \thankstext{t1}{Thanks to somebody} 
% \address{line 1\\ line 2\\ printead{e1}}
% \affiliation{Some University}
\begin{aug}
\author{\fnms{Y.} \snm{Song}\ead[label=e1]{ysong44@illinois.edu}}
\and
\author{\fnms{G.} \snm{Fellouris}\ead[label=e2]{fellouri@illinois.edu}}\\

\address{Department of Statistics, Coordinated Science Lab, \\ University of Illinois, Urbana-Champaign, \\ 725 S. Wright Street, Champaign 61820, USA\\ \printead{e1} \and \printead*{e2}}

\runauthor{Y. Song and G. Fellouris}
\end{aug}

\begin{abstract}
Assuming that data are collected sequentially from independent streams, we consider the  simultaneous testing of multiple binary hypotheses  under two  general setups; when the number of signals (correct alternatives) is known in advance, and when we only have a  lower and an upper bound for it.  In each of these setups,  we propose feasible  procedures that control, without any distributional assumptions, the familywise error probabilities of both type I and type II below  given, user-specified levels. Then, in the case of i.i.d. observations in each stream, we show that the proposed procedures achieve  the optimal  expected sample size,  under every  possible signal configuration, asymptotically as the two  error probabilities vanish at arbitrary rates.  A simulation study is presented in a completely  symmetric case and supports insights obtained from our asymptotic results, such as the fact that knowledge of the exact number of signals roughly halves the expected number of observations compared to  the case of no prior  information. 
\end{abstract}

\begin{keyword}[class=MSC]
\kwd[Primary ]{62L10:60G40}
\end{keyword}

\begin{keyword}
\kwd{Multiple testing}
\kwd{sequential analysis}
\kwd{asymptotic optimality}
\kwd{prior information}
\end{keyword}

\end{frontmatter}

%\begin{abstract}
%Assuming that data are collected sequentially from independent streams, we consider the  simultaneous testing of multiple hypotheses  under two  general setups; when the number of signals (correct alternatives) is known in advance and when there is only a lower and an upper bound for it.  In each of these setups  we propose feasible  sequential procedures that control the familywise errors of both types in a strong sense and achieve  the optimal  expected sample size,  under each possible configuration of hypotheses,  as the two  error probabilities vanish.  A simulation study is presented in a completely  symmetric case and verifies insights obtained from our asymptotic results, such as the fact that knowledge of the exact number of signals roughly halves the expected number of observations compared to  the case of no prior  information. 
%\end{abstract}

%\begin{keywords}
%Asymptotic optimality; Multiple testing; Sequential analysis.
%%I checked recent issues; they seem to use phrase in alphabetic orders
%%Address; Asymptotic;  Multiple; Optimality; Sequential;  Testing.
%\end{keywords}

\section{Introduction}
Multiple testing, that is the simultaneous consideration of $K$ hypothesis testing problems, $H_0^k$ versus $H_1^k$, $1 \leq k \leq K$,  is one of the oldest, yet still very active areas of statistical research. The vast majority of work in this area assumes a fixed set of observations  and focuses on testing procedures that control the  familywise  type I error (i.e., at least one false positive), as in \cite{marcus1976closed,holm1979simple,hommel1988stagewise},
or less stringent  metrics of this error, as in \cite{benjamini1995controlling} and \cite{lehmann2005}.

%These works assume the sample size to be fixed, and thus can only control false positive rate. 

%, incautious combination of single-inference procedures would result in unexpected type-I error in family sense. 
%Classical works, {\sl e.g.} Bonferroni correction, closed testing principle~\cite{marcus1976closed,hommel1988stagewise}, stepwise procedure~\cite{holm1979simple} {\sl etc.}, aim to control the Family-Wise Error Rate-I (the probability of committing at least one false positive mistake) in strong sense, {\sl i.e.} under any configuration of the hypotheses being tested. Less stringent error metric, like False Discovery Rate~\cite{benjamini1995controlling}, k-FWER~\cite{hommel1988controlled,lehmann2005}, False Discovery Proportion~\cite{lehmann2005} {\sl etc.}, are later proposed, as well as procedures to control them. These works assume the sample size to be fixed, and thus can only control false positive rate. %The simultaneous testing of many hypotheses is one of the most important and classical statistical problems. This problem has received a lot of attention in the literature and a number of procedures have been proposed that guarantee a certain type of error control.  

The multiple testing problem has been less studied  under the assumption that observations are acquired sequentially,  in which case the sample size is random.
% and based on the experiment.  
The sequential setup is relevant in many applications, such as  multichannel signal detection~\citep{mei2008asymptotic,draglia1999multihypothesis},
outlier detection~\citep{li2014universal},  clinical trials with multiple end-points~\citep{bartroff2008generalized},
ultra high throughput mRNA sequencing data~\citep{bartroff_arxiv}, in which it is vital to make a quick decision in real time,  using the smallest possible number of observations. 
%Sequential methods have also been  useful for the efficient implementation of resampling-based  procedures for multiple testing \citep{Jiang01122012}.

\citet{bartroff2010multistage} were the first to propose a sequential test  that controls the familywise error of type I.  \citet{de2012sequential,de2012step} and \citet{bartroff2014sequential} proposed universal sequential procedures that  control simultaneously the familywise errors of both type I \textit{and} type II, a  feature that is possible due to the sequential nature of sampling. The proposed sequential procedures in these works were shown through simulation studies to offer substantial savings in the average sample size in comparison to  the corresponding fixed-sample size tests.  % with the same precision.   

A  very relevant problem to multiple testing is the classification problem, in which there are $M$  hypotheses, $H_1, \ldots, H_M$, and the goal is to select the correct one among them. The classification problem has been studied extensively in the literature of sequential analysis, see e.g. 
\cite{sobel1949,armitage,%KieferSacksAMS1963,
lorden1977nearly,tartakovsky1998asymptotic, draglia1999multihypothesis,dragalin2000multihypothesis}, generalizing the seminal  work of \citet{wald1945sequential} on  binary testing $(M=2)$. 
% \citet{draglia1999multihypothesis,dragalin2000multihypothesis}at the heart of the historical development of sequential analysis, going back to~\citet{wald1945sequential,sobel1949,armitage}. %\citet{KieferSacksAMS1963}. \citet{lorden1977nearly}  obtained an almost optimal  sequential classification rule within the class of  procedures that control the  probability of  selecting  $H_j$ when the correct hypothesis is $H_i$ below a user-specified probability $\alpha_{ij}$  for every $1\leq i \neq j \leq M$. \citet{draglia1999multihypothesis,dragalin2000multihypothesis}  proposed  asymptotically optimal schemes within the class of classification rules  that control the probability of erroneously selecting $H_i$ below a user-specified probability $\alpha_i$ for every  $1 \leq i \leq M$.
\citet{dragalin2000multihypothesis} considered the  multiple testing problem
as a special case of the classification problem  under the assumption of a \textit{single signal}  in $K$ independent streams,   and  focused on procedures  that control the probability of erroneously  claiming the signal to be in stream  $i$  for every  $1 \leq i \leq M=K$. 
%below a user-specified level $\alpha_i$. 
In this framework, they proposed an asymptotically optimal sequential test as all these error probabilities go to 0.   The same approach of treating the multiple testing problem as a classification problem has been taken by~\citet{li2014universal} under the assumption of an upper bound on the number of  signals in the $K$ independent streams, and  a \textit{single  control}  on the  maximal  mis-classification probability.

We should stress that  interpreting multiple testing  as a  classification problem  does not  generally lead to feasible procedures. Consider, for example, the case of no prior information, which is the default assumption in the multiple testing literature. Then,  multiple testing  becomes a classification problem with $M=2^K$ categories and a brute-force implementation of existing classification procedures becomes  infeasible even for  moderate values of $K$, as the  number of statistics that need to be computed sequentially  grows exponentially with $K$.  Independently of feasibility considerations,  to the best of our knowledge there is no optimality theory regarding the expected sample size that can be achieved by  multiple testing procedures,  with or without prior information, that  control the  familywise errors of both  type I and type II. Filling this gap was one of the motivations  of this paper. 
 
The main contributions of the current work are the following: first of all, assuming that the data streams that correspond to the various hypotheses are independent,   we propose feasible  procedures that control  the familywise errors of both type I and type II  below  arbitrary, user-specified levels. We do so 
 under two general setups regarding prior information;  when the true number of signals  is known in advance,  and  when there is only a lower and an upper bound for it.   The former  setup  includes the case of a single  signal  considered in~\citet{draglia1999multihypothesis,dragalin2000multihypothesis}, whereas 
the latter includes  the case of no prior information, which is the underlying  assumption in~\citet{de2012sequential,de2012step,bartroff2014sequential}. While we provide universal threshold values that guarantee the desired error control in the spirit of the above works, we also propose a Monte Carlo simulation method based on importance sampling  for the efficient calculation of non-conservative thresholds in practice, even for very small error probabilities. 
%However, the most important contribution of the present work is that 
More importantly,  in the case of independent and identically distributed (i.i.d.) observations in each stream,  we show  that   the proposed multiple testing procedures  attain  the optimal expected sample size, for \textit{any} possible signal configuration, to a first-order asymptotic approximation as the two  error probabilities go to zero in an \textit{arbitrary} way.  Our  asymptotic results also provide insights about the effect of prior information on the number of signals, which are corroborated by a simulation study.

The  remainder of the paper is organized as follows. In Section~\ref{formulation} we formulate the problem mathematically. In Section \ref{procedures} we present the proposed procedures and show how they can be designed to guarantee the desired error control. In  Section~\ref{is} we propose an efficient Monte Carlo simulation method for the determination of non-conservative critical values in practice.  In Section~\ref{theory} we establish the  asymptotic optimality  of the proposed procedures in the i.i.d.   setup.   In Section~\ref{simulation} we illustrate our asymptotic results with a simulation study. In Section~\ref{generalize} we conclude and  discuss potential generalizations of our work. Finally,  we present two useful lemmas for our proofs in an Appendix. %~\ref{appen}.

\section{Problem formulation} \label{formulation}
Consider $K$ independent streams of observations, 
$X^k := \{X_n^k: n \in \bN\}$, $k \in [K]$,  where $[K] := \{1,\ldots, K\}$ and $\bN:= \{1, 2, \ldots\}$.  For each $k \in [K]$, let   $\Pro^{k}$ be the  distribution of   $X^k$, for which  we consider two simple hypotheses,
\begin{equation*}  %\label{hypo}
H_0^k:\; \Pro^k = \Pro_0^k \; \text{ versus }\; H_1^k:\; \Pro^k = \Pro_1^k, 
\end{equation*}
where $\Pro_0^k$ and $\Pro_1^k$ are distinct  probability measures  on the canonical space of $X^k$.  We will say that there is ``noise'' in the $k^{th}$ stream under 
$\Pro_0^k$ and ``signal'' under $\Pro_1^k$.   Our goal is to  simultaneously test these  $K$ hypotheses when data from all streams  become available sequentially and we want to make a  decision as soon as possible.

Let $\MF_n$ be the $\sigma$-field generated by all streams up to time $n$, i.e.,   $\MF_n = \sigma (X_1, \ldots, X_n )$, where   $X_n = ( X_n^1, \ldots ,X_n^K)$. We define  a  \textit{sequential} test for the multiple testing problem of interest to be  a  pair $(T,d)$ that  consists of an $\{\MF_n\}$-stopping time, $T$, at which we stop sampling in all streams,  and an $\MF_T$-measurable decision rule, $d=(d^1, \ldots, d^K)$, each component of which takes values in  $\{0,1\}$. The interpretation is that % such %with the understanding 
  we declare upon stopping   that there is signal (resp. noise) in the $k^{th}$ stream when $d^k=1$ (resp. $d^k=0$).  With an abuse of notation, we will also use $d$ to denote the  subset  of streams in which we declare that signal is present, i.e.,  $\{k \in [K] : \; d^k = 1\}$. % that is,  we also identify  $d$ with.

% the  alternative hypothesis is selected upon stopping. 
% which establishes a bijection between $\{0,1\}^K$ and $2^{[K]}$, the power set of $[K]$. Therefore, in what follows, without further explanation, we may think of the decision rule $d$ as a subset of $[K]$, with the understanding that  $H_i^k$ is selected  on the event $\{ k \in d , T<\infty\}$,  $i=0,1$.

%We will be interested in sequential tests that control both the  false positive and the false negative  error rate in a strong   familywise sense.  % a feature that is possible due to the sequential nature of the observations, as has been shown in~\citet{de2012sequential,de2012step,bartroff2014sequential}.  To be specific,

For any subset $\cA \subset [K]$ we define the  probability measure
\begin{equation*} 
\Pro_{\cA} := \bigotimes_{k=1}^K \Pro^k ; \qquad\; \Pro^k = \begin{cases}
\Pro_0^k ,\qquad \text{ if } k \notin \cA \\
\Pro_1^k , \qquad \text{ if } k \in \cA
\end{cases} ,
\end{equation*}
such that  the distribution of $\{X_n, n \in \bN \}$  is $\Pro_\cA$ when $\cA$ is  the true subset of signals,
%, that is the subset of streams in which is signal is present}. 
and  for an arbitrary sequential  test $(T,d)$  we set:
\begin{align*}
\{\cA \lesssim d \}  &:= \{ (d \setminus \cA) \neq \emptyset\} = \bigcup_{j \not \in \cA} \{d^{j} = 1\}, \\
\{d \lesssim \cA \}  &:=   \{(\cA \setminus d) \neq \emptyset\} =\bigcup_{k \in \cA} \{d^k = 0\}.
\end{align*}
Then,  $\Pro_{\cA}( \cA \lesssim d )$ is the probability  of at least one false positive (\textit{familywise type I error}) and $\Pro_{\cA} \left( d \lesssim \cA \right)$ the probability  of at least one false negative (\textit{familywise type II error}) of $(T,d)$  \textit{when the true subset of signals is $\cA$}. 

In this work we are  interested in sequential tests that  control these probabilities   below  user-specified levels  $\alpha$ and $\beta$ respectively, where $\alpha, \beta \in (0,1)$, for any possible  subset of signals.   In order to be able to incorporate  prior information,  we assume that the true subset of signals is known to  belong to a class  $\cP$ of subsets of $[K]$,  not necessarily equal to the  powerset, %$2^{[K]}$, 
and we  focus on sequential tests in  the class
 $$
 \Delta_{\alpha,\beta}(\cP) := \left\{ (T,d):  \Pro_{\cA}(\cA \lesssim d) \leq \alpha \; \; \text{and} \;\;  \Pro_{\cA} \left( d \lesssim \cA \right) \leq \beta  \; \;  \text{for every} \; \;  \cA \in \cP \right\}.
 $$
 
We   consider, in particular, two general cases for class  $\cP$. In the  first one,   it is known that  there are  exactly $m$ signals in the $K$ streams, where   $1 \leq m \leq K-1$. In the second,  it is known that  there are at least $\ell$  and at most $u$ signals,  where   $0 \leq \ell < u \leq K$. In the former case we write  $\cP=\cP_{m}$ and in the latter $\cP=\cP_{\ell, u}$, where 
\begin{equation*} 
\cP_m:= \left\{ \cA \subset [K]: |\cA| = m \right\}, \quad \cP_{\ell, u}:= \left\{ \cA \subset [K]: \ell \leq  |\cA| \leq  u \right\}.
\end{equation*}
When $\ell=0$ and $u=K$, the class $\cP_{\ell, u}$ is the powerset  of  $[K]$, which corresponds to the  case of no prior information regarding the multiple testing problem.  

Our  main focus  is on  multiple testing procedures that not only belong to  $\Delta_{\alpha,\beta}(\cP)$ for a given class $\cP$, % control the familywise errors of both kinds, 
but also achieve the minimum possible  expected sample size, under each possible signal configuration, for small error probabilities. To be more specific, let  $\cP$ be a given class of subsets and  let   $(T^*,d^*)$ be a  sequential test that  can designed to  belong to $\Delta_{\alpha,\beta}(\cP)$  for any given $\alpha,\beta \in (0,1)$. We say that $(T^*,d^*)$  is \textit{asymptotically optimal with respect to class $\cP$}, if for every $\cA \in \cP$  we have  as $\alpha,\beta \to 0$
\begin{equation*}
\Exp_\cA \left[T^* \right] \sim \inf_{(T,d) \in \Delta_{\alpha,\beta}(\cP)} \Exp_\cA\left[T\right] ,
\end{equation*} 
where  $\Exp_{\cA}$ refers to  expectation under $\Pro_{\cA}$ and  $x\sim y$ means that $x/y \rightarrow 1$.   The ultimate goal of this work is to  propose feasible  sequential tests that are asymptotically optimal  with respect to  classes of the form $\cP_{m}$ and $\cP_{\ell, u}$.
% undin the case  iid observations in each stream. However, we will first show that these procedures can be designed to guarantee the desired error control  without any distributional assumption in each stream. 

% that, for each $k \in [K]$,  the observations in stream $k$, $\{X_n^k, n\geq 1\}$,  are  independent random variables with common density $f_i^k$  under $\Pro_i^k$ relative to a $\sigma$-finite measure $\mu_k$, $i=0,1$, such that the  Kullback–--Leibler information numbers 
%\begin{equation*}
%D_0^k = \int \log\left(\frac{f_0^k}{f_1^k}\right) f_0^k d\mu_k, \quad
%D_1^k = \int \log\left(\frac{f_1^k}{f_0^k}\right) f_1^k d\mu_k
%\end{equation*}
%are positive and finite. As a result,
 %the log-likelihood ratio process  in the $k^{th}$ stream,
%\begin{equation*}
%\lambda^{k}(n) = \log \frac{d\Pro_{1}^k}{d\Pro_{0}^k}  (\MF_n )=  \sum_{j=1}^{n} \log \frac{f_1^k(X_j^k)}{f_0^k(X_j^k)}, \quad n \in \bN,
%\end{equation*}
 %is a   random walk with  drift  $D_1^k$ under $\Pro_1^k$ and $-D_0^k$ under $\Pro_0^k$.  

%\section{Proposed sequential multiple testing procedures} \label{procedures}

\subsection{Assumptions and notations}

Before we continue with the presentation and analysis of the  proposed multiple testing procedures, we will introduce some additional notation, and impose  some minimal conditions on the distributions in each stream, which  we will assume to hold throughout the paper. %Moreover, we will introduce some additional notation that we will use throughout this work. 

First of all,  for each stream $k \in [K]$ and time $n \in \bN$ we assume that the probability measures $\Pro_0^k$ and $\Pro_1^k$ are  mutually absolutely continuous  when restricted to the $\sigma$-algebra $\MF_n^k=\sigma(X_{1}^k, \ldots, X_n^k)$,  and we denote by
\begin{equation} \label{llr}
\lambda^{k}(n) := \log \frac{d\Pro_{1}^k}{d\Pro_{0}^k}  (\MF_n^k)
\end{equation}
the cumulative log-likelihood ratio  at time $n$ based on the data in the $k^{th}$ stream.  Moreover, we assume that  for each stream $k \in [K]$ the probability measures $\Pro_0^k$ and $\Pro_1^k$ are singular on $\MF_\infty^k:=\sigma (\cup_{n \in \bN} \MF_n^k)$, which implies that   %To avoid trivial cases, we assume that  the two hypotheses in each stream are well separated, in the sense that  for each  $k \in [K]$ we have
\begin{align}\label{reg_lr}
\Pro_0^k \left(\lim_{n \to \infty} \lambda^k(n) = -\infty \right) 
= \Pro_1^{k} \left(\lim_{n \to \infty} \lambda^k(n) = \infty \right) = 1.
\end{align}
Intuitively, this means that as observations accumulate, the evidence in favor of the correct hypothesis  becomes arbitrarily strong. The latter assumption is  necessary in order to design procedures that terminate almost surely under every scenario. \textit{We do not make any other distributional assumption  until Section \ref{theory}}.

%which guarantees that the two hypotheses in each stream are well separated. 
%This mild condition states that as observations accumulate, the evidence that supports the correct hypothesis  becomes arbitrarily strong. It is satisfied for example when $\{X_n^k\}_{n \in \bN}$ is an iid sequence of random variables with finite first order moment.

We use the   following notation for the ordered, local, log-likelihood ratio statistics at time $n$:
 $$\lambda^{(1)}(n) \geq  \ldots  \geq \lambda^{(K)}(n),$$
 and we denote by  $i_1(n), \ldots, i_K(n)$  the corresponding stream indices, 
 i.e., 
  $$\lambda^{(k)}(n) = \lambda^{i_k(n)}(n), \text{ for every } k \in [K].$$
   
Moreover, for every $n \in \bN$ we denote by  $p(n)$ the number of positive log-likelihood ratio statistics at time $n$, i.e., 
 $$\lambda^{(1)}(n) \geq  \ldots \geq \lambda^{(p(n))}(n) > 0 \geq \lambda^{(p(n) +1)}(n) \geq \ldots \geq \lambda^{(K)}(n).$$ 
 
 For any two subsets $\cA, \MC \subset [K]$ %such that   $\cA \neq \MC$, 
we denote by   $\lambda^{\cA,\MC}$ the log-likelihood ratio  process of $\Pro_{\cA}$ versus $\Pro_{\MC}$, i.e.,
\begin{align}\label{ll_diff}
\lambda^{\cA,\MC}(n) &:=  \log  \frac{d\Pro_{\cA}}{d\Pro_{\MC}}  (\MF_n) 
= \sum_{k \in \cA\setminus \MC} \lambda^{k}(n) -
\sum_{k \in \MC\setminus \cA} \lambda^{k}(n), \quad n \in \mathbb{N}.
\end{align}
%where the second equality is due to~\eqref{product}. 

Finally,  we use $|\cdot|$ to denote set cardinality, for any two real numbers $x,y$ we set $x\wedge y=\min\{x,y\}$ and $x\vee y=\max\{x,y\}$,   and for any measurable event $\Gamma$ and  random variable $Y$ we use the following notation
$$\Exp_{\cA}[ Y ; \Gamma] := \int_{\Gamma} Y d\Pro_\cA.$$

\section{Proposed sequential multiple testing procedures} \label{procedures}

In this section we present the proposed procedures and  show how they can be designed in order to guarantee the desired error control.

\subsection{Known number of signals}
In this subsection we consider the setup in which the number of signals  is known to be equal to $m$ for some $1 \leq m \leq K-1$, thus, $\cP = \cP_m$.  Without loss of generality, we  restrict ourselves to multiple testing procedures  $(T, d)$ such that  $|d|=m$. Thus,  the class of admissible sequential tests  takes the form 
$$
\Delta_{\alpha,\beta}(\cP_{m})=  \left\{ (T,d) :  \; \Pro_{\cA}(d\neq \cA) \leq \alpha \wedge \beta \, \; \text{for every} \; \cA \in \cP_{m} \right\},
$$
since  for  any  $\cA \in \cP_{m}$ and $(T, d)$ such that  $|d|=m$ we have 
$$
\{\cA \lesssim d \}\; = \; \{d \lesssim \cA \}\; = \; \{d\neq \cA\}.
$$
In this context, we propose the following sequential scheme:  stop as soon as  the \textit{gap} between the  $m$-th and $(m+1)$-th ordered  log-likelihood ratio statistics becomes larger than some constant  $c > 0$, and  declare that signal is present in the $m$ streams with the  top  log-likelihood ratios at the time of stopping. 
Formally, we propose the following procedure, %$\delta_G= (T_G, d_G)$,  
to which we refer as ``gap rule":
\begin{align} \label{gap}
\begin{split} 
T_G &:= \inf \left\{n \geq 1: \lambda^{(m)}(n) - \lambda^{(m+1)}(n) \geq c\right\} ,  \\
d_G &:= \{i_1(T_G), \ldots, i_m(T_G)\}.
\end{split}
\end{align}
Here, we suppress the dependence of $(T_G, d_G)$ on $m$ and $c$  to lighten the notation.   The next theorem shows %that for any choice of $c$ the gap rule terminates almost surely, and we show 
how to select threshold $c$ in order to guarantee the desired error control.
%In the case of 
%$\cP = \cP_m$ for some $1 \leq m \leq K-1$, any procedure can not commit only one type of error;
%%In the next theorem,  we show how to select $c$ in order to guarantee the desired error control.  Before we state it, it is important to note that when  the number of signals  is known to be equal to $m$ for some $1 \leq m \leq K-1$, then whenever an arbitrary  multiple testing procedure $\delta=(T,d)$ makes a type-I error, it also makes a type-II error. 
%this  implies that   for  any subset $\cA \in \cP_{m}$ we have 
%\begin{align} \label{fwer3}
%\MFI_{\cA} (\delta) =   \MFII_{\cA}(\delta)  = \Pro_{\cA}(d\neq \cA)
%\end{align}
% and  the class of admissible sequential tests  in this context takes the form  
%$$
%\Delta_{\alpha,\beta}(\cP_{m})=  \left\{ (T,d) :  \; \Pro_{\cA}(d\neq \cA) \leq \alpha \wedge \beta  \; \text{for every} \; \cA \in \cP_{m} \right\}, 
%$$
%where  here and henceforth  we use $\alpha \wedge \beta$ to denote $\min\{\alpha,\beta\}$.
\begin{comment}
We start with a lemma that shows  the gap rule will stop almost surely.
\begin{lemma}\label{gap_finite}
Suppose assumption~\eqref{reg_lr} holds. Then for any $A \in \cP_m$ and $c >0$,
\begin{align*}
\Pro_{A}(T_G < \infty) = 1.
\end{align*}
\end{lemma}
\begin{proof}
The proof can be found in Appendix~\ref{proof:finite_TG}.
\end{proof}
\end{comment}

\begin{theorem} \label{Error_control_TG}
Suppose that assumption~\eqref{reg_lr} holds. Then, for any $\cA \in \cP_{m}$ and $c>0$  we have $\Pro_{A}(T_G < \infty) = 1$ and 
\begin{equation} \label{bound}
\Pro_\cA\left( d_G \neq \cA \right) \leq m(K-m) e^{-c}.
\end{equation}
Consequently,   $(T_G, d_G) \in \Delta_{\alpha,\beta}(\cP_m)$ when 
  threshold $c$ is selected as 
\begin{equation} \label{thres}
c = | \log (\alpha\wedge \beta) | +  \log ( m(K-m)). 
\end{equation} 
\end{theorem}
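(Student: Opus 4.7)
The plan is to prove the theorem in three stages: first establish almost-sure termination, then bound the error probability via a union bound over ``swap pairs'' combined with a likelihood-ratio change of measure, and finally solve for $c$.

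For termination, I would fix $\cA \in \cP_m$ and note that under $\Pro_\cA$, assumption~\eqref{reg_lr} gives $\lambda^k(n) \to +\infty$ a.s.\ for every $k \in \cA$ and $\lambda^k(n) \to -\infty$ a.s.\ for every $k \notin \cA$. Hence with $\Pro_\cA$-probability one, there exists $N$ such that for all $n \geq N$ the ordered indices $i_1(n), \ldots, i_m(n)$ coincide with $\cA$, and $\lambda^{(m)}(n) - \lambda^{(m+1)}(n) \to \infty$. In particular the gap eventually exceeds any fixed $c > 0$, so $\Pro_\cA(T_G < \infty) = 1$.

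For the error bound, the key observation is that on $\{d_G \neq \cA\} \cap \{T_G < \infty\}$ there must exist $i \in \cA \setminus d_G$ and $j \in d_G \setminus \cA$. By construction of $d_G$ and the stopping condition, $\lambda^j(T_G) \geq \lambda^{(m)}(T_G)$ and $\lambda^i(T_G) \leq \lambda^{(m+1)}(T_G)$, so $\lambda^j(T_G) - \lambda^i(T_G) \geq c$. This yields the inclusion
\begin{equation*}
\{d_G \neq \cA\} \;\subseteq\; \bigcup_{i \in \cA} \bigcup_{j \notin \cA} \left\{ \lambda^j(T_G) - \lambda^i(T_G) \geq c \right\}.
\end{equation*}
For each such pair $(i,j)$ I would introduce the ``swapped'' configuration $\cA' := (\cA \setminus \{i\}) \cup \{j\}$, which lies in $\cP_m$. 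By formula~\eqref{ll_diff}, $\lambda^{\cA',\cA}(n) = \lambda^j(n) - \lambda^i(n)$. Since $T_G$ is an $\{\MF_n\}$-stopping time that is $\Pro_{\cA'}$-a.s.\ finite (by the argument above applied to $\cA'$), a standard change-of-measure identity gives
\begin{equation*}
\Pro_\cA\!\left( \lambda^{\cA',\cA}(T_G) \geq c \right) \;=\; \Exp_{\cA'}\!\left[ e^{-\lambda^{\cA',\cA}(T_G)};\, \lambda^{\cA',\cA}(T_G) \geq c \right] \;\leq\; e^{-c}.
\end{equation*}

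Applying the union bound over the $m(K-m)$ pairs $(i,j)$ then yields $\Pro_\cA(d_G \neq \cA) \leq m(K-m)\, e^{-c}$, which is \eqref{bound}. Setting the right-hand side equal to $\alpha \wedge \beta$ and solving for $c$ produces the threshold in \eqref{thres}, and, since the resulting error bound dominates both $\alpha$ and $\beta$ for every $\cA \in \cP_m$, we obtain $(T_G, d_G) \in \Delta_{\alpha,\beta}(\cP_m)$. The main technical point to handle carefully is the change of measure at the random time $T_G$; I expect this to be routine given the a.s.\ finiteness established in step one, but it is the one place where the argument is slightly subtle rather than purely combinatorial.
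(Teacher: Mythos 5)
Your proposal is correct and follows essentially the same route as the paper: the same union bound over swap pairs $(i,j)$ with the event $\{\lambda^j(T_G)-\lambda^i(T_G)\geq c\}$, and the same change of measure to the swapped configuration $\cA'=\cA\cup\{j\}\setminus\{i\}$ via Wald's likelihood ratio identity (your $\Exp_{\cA'}[e^{-\lambda^{\cA',\cA}(T_G)};\cdot]$ is the paper's $\Exp_{\MC}[e^{\lambda^{\cA,\MC}(T_G)};\cdot]$ written from the other side), with only a cosmetic difference in the termination step, where the paper bounds $T_G$ by the auxiliary time $T_G'$ while you argue directly from \eqref{reg_lr} that the gap diverges.
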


\begin{proof}

Fix $\cA \in \cP_m$ and $c>0$. We observe that   $T_G \leq T_G'$, where
\begin{align} \label{T_G_prime} 
\begin{split}
 T'_G &= \inf \left\{ n \geq 1: \lambda^{(m)}(n) - \lambda^{(m+1)}(n) \geq c , \; i_{1}(n) \in \cA, \ldots,  i_{m}(n) \in \cA  \right\}  \\ 
&= \inf \left\{n \geq 1: \lambda^{k}(n) - \lambda^{j}(n) \geq c \; \; \text{for every} \, k \in \cA \; \text{and} \; j \notin \cA \right\}.
\end{split}
 \end{align}  
Due to  condition~\eqref{reg_lr}, it is clear that $\Pro_\cA(T_G' < \infty) = 1$, which proves that $T_G$ is also almost surely finite under $\Pro_\cA$.  We now focus on proving \eqref{bound}.  The gap rule makes a mistake  under $\Pro_\cA$  if there exist 
 $ k \in \cA$ and $j \notin \cA$ such that the event $\Gamma_{k,j}= \left\{\lambda^{j}(T_G) -\lambda^{k}(T_G) \geq c \right\}$ occurs. In other words, 
% $ \left\{ d_G \neq \cA \right\} =\{\Gamma_{k,j}  \;  \; \text{for some} \;  k \in \cA, j \notin \cA \}$ ; 
$$ \left\{ d_G \neq \cA \right\} =\bigcup_{k \in \cA, j \notin \cA} \Gamma_{k,j},$$
and  from  Boole's inequality we have 
\begin{equation*}
\Pro_{\cA}( d_G \neq \cA ) \leq \sum_{k \in \cA, j \notin \cA} \Pro_{\cA}(\Gamma_{k,j}).
\end{equation*} 
 Fix $k \in \cA, j \notin \cA$ and  set $\MC = \cA\cup \{j\} \setminus \{k\}$.  Then,  from~\eqref{ll_diff} we have that  $\lambda^{\cA,\MC}= \lambda^{k}-\lambda^{j}$ and  from Wald's likelihood ratio identity   it follows that 
\begin{align} 
\begin{split}
\Pro_\cA(\Gamma_{k,j})  &= \Exp_{\MC}\left[\exp\{\lambda^{\cA, \MC}(T_G) 
\}; \Gamma_{k,j} \right] \\
&= \Exp_{\MC}\left[ \exp\{\lambda^{k}(T_G) - \lambda^{j}(T_G)\}; \Gamma_{k,j} \right]  \leq e^{-c},
\end{split}\label{com}
\end{align}
where the last inequality holds because $\lambda^{j}(T_G) - \lambda^{k}(T_G) \geq c$ on $\Gamma_{k,j}$. Since $|\cA| =m$ and $|\cA^c| = K -m$,   from the last two inequalities we obtain \eqref{bound}, which completes the proof.
\end{proof}

\subsection{Lower and upper bounds on the number of signals}
In this subsection,  we consider the setup in which we know  that there are at least $\ell$ and at most $u$ signals for some  $0\leq \ell < u \leq K$, that is, $\cP = \cP_{\ell,u}$.   In order to describe the proposed procedure, it is useful to first introduce the  ``intersection rule",  $(T_I,d_{I})$, according to which   we stop sampling as soon as \textit{all} log-likelihood ratio statistics are outside the interval $(-a,b)$, and at this time we  declare that signal is present (resp. absent) in those streams with positive (resp. negative) log-likelihood ratio, i.e., 
\begin{align}  \label{intersection}
\begin{split}
T_I &:= \inf \left\{n \geq 1: \lambda^k(n) \not \in (-a,b)  \;  \;  \text{for every} \; k \in [K] \right\},\\
d_I &:= \{i_{1}(T_I), \ldots, i_{p(T_{I})}(T_I)\},
% d_{I}^k &:= \begin{cases}
%1, \quad \text{ if } \quad \lambda^{k}(T_I) \geq  b \\
%0, \quad \text{ if } \quad \lambda^{k}(T_I)  < -a
%\end{cases}   \text{for every} \;  k \in [K],
\end{split} 
\end{align}
recalling  that  $p(n)$ is the number of positive log-likelihood ratios at time $n$.   This procedure was proposed by~\citet{de2012sequential}, where it was also shown that
when  the thresholds are selected as 
  \begin{equation} \label{thres-intersection}   
 a = |\log \beta|+ \log K , \quad b = |\log \alpha|+ \log K,
 \end{equation}
 the familywise type-I and type-II error probabilities are bounded by  $\alpha$ and $\beta$ for any possible signal configuration, i.e.,   $(T_I,d_{I}) \in \Delta_{\alpha,\beta}(\cP_{0,K})$.

A straightforward way to incorporate  the prior information of at least $\ell$ and at most $u$ signals   in the intersection rule is to  modify the stopping time  in \eqref{intersection} as follows:
\begin{align}\label{modifed-intersection}
\tau_2 &:= \inf\left\{n \geq 1: \ell \leq p(n) \leq u \; \text{and} \;\lambda^k(n) \not \in (-a,b) \; \text{for every} \; k \in [K]  \right\},
\end{align}
while keeping the same decision rule as in \eqref{intersection}.  Indeed,  stopping according to $\tau_2$  guarantees that the  number of null hypotheses  rejected upon stopping will be between $\ell$ and $u$. However, as we will see in Subsection \ref{noprior}, this rule will not in general  achieve asymptotic optimality in the boundary cases of exactly $\ell$ and  exactly  $u$ signals. In order to obtain an asymptotically optimal rule, %under every possible signal configuration, 
we need to be able to stop faster when there are exactly $\ell$ or $u$  signals, which can  be achieved by stopping at 
\begin{align*}  
\tau_1 &:= \inf\left\{n \geq 1:   \lambda^{(\ell+1)}(n) \leq -a,\; \lambda^{(\ell)}(n) - \lambda^{(\ell+1)}(n) \geq c  \right\},\\
\quad \text{and} \quad \tau_3 &:= \inf \left\{ n \geq 1:  \lambda^{(u)}(n) \geq b,\;  \lambda^{(u)}(n) - \lambda^{(u+1)}(n) \geq d \right\},
\end{align*}
respectively. Here, $c$ and $d$ are additional positive thresholds that will be selected, together with $a$ and $b$, in order to guarantee the desired error control.

We can think of $\tau_1$ as a combination of the intersection rule and the gap rule that corresponds to the case of exactly $\ell$ signals. Indeed,  $\tau_1$ stops when  $K-\ell$  log-likelihood ratio statistics are simultaneously below $-a$,  but unlike the intersection rule  it  does not wait for the remaining $\ell$   statistics to be larger than $b$; instead,  similarly to the gap-rule in~\eqref{gap} with $m=\ell$, 
it requires the gap between the top $\ell$ and the bottom $K- \ell$ statistics to be larger than $c$.  In a similar way, $\tau_3$ is a combination of the intersection rule and the gap rule that corresponds to the case of exactly $u$ signals. %Indeed,  $\tau_3$ stops when   $u$ log-likelihood ratio statistics are simultaneously above $b$, but does not wait for the remaining $K-u$ statistics to become smaller than $-a$; instead, it requires the largest  of the remaining  statistics to be smaller by $d$ than all  statistics that are above $b$. 

Based on the above discussion,  when we know  that there are at least $\ell$ and at most $u$ signals, we  propose  the following procedure, to which  we  refer as ``gap-intersection'' rule:
 \begin{align}\label{gap-intersection}
T_{GI} &:= \min\{\tau_1,\tau_2,\tau_3\} , \quad d_{GI} := \{i_1(T_{GI}), \ldots,i_{p'}(T_{GI})\},
\end{align} 
where $p':=( p(T_{GI}) \wedge\ell ) \vee u$ is a truncated version of the number of positive log-likelihood ratios  at $T_{GI}$, i.e.,  if $p'=\ell$ when $p(T_{GI}) \leq \ell$, $p'=u$ when $p(T_{GI}) \geq u$ and  $p'=p(T_{GI})$ otherwise. In other words, 
%According to the proposed procedure in \eqref{gap-intersection},  
we stop  sampling  as soon as one of the stopping  criterion in $\tau_1$, $\tau_2$ or $\tau_3$ is is satisfied, and  we reject upon stopping the null hypotheses in the $p'$ streams with the highest log-likelihood ratio values at time $T_{GI}$. 

%Here, $a,b,c,d$ are positive constants that will be selected to guarantee the desired error control. 
%Notice that  $\tau_1$  (resp. $\tau_3$) is a modification of the gap rule in \eqref{gap}  with $m=\ell$ (resp. $m=u$).  

As before, we suppress the dependence  on $\ell,u$ and $a,b,c,d$ in order to lighten the notation.  Moreover,  we set $\lambda^{(0)}(n) = -\infty$ and $\lambda^{(K+1)}(n) = \infty$ for every $n \in \bN$, which implies that if     $\ell = 0$, then $\tau_1 = \infty$, and if  $u = K$, then $\tau_3 = \infty$.  When in particular   $\ell = 0$ and $u =K$, that is the case of no prior information,  $T_{GI}=\tau_2$ and  $(T_{GI}, d_{GI})$  reduces to the intersection rule, $(T_{I}, d_{I})$, defined in  \eqref{intersection}. 
%", $\delta_{I} = (T_{I},d_{I})$, that was proposed by~\citet{de2012sequential} and is defined as follows:
%\begin{align}  \label{intersection}
%\begin{split}
%T_I &:= \inf \left\{n \geq 1: \lambda^k(n) \not \in (-a,b)  \quad   \text{for every} \; k \in [K] \right\},\\
% d_{I}^k &:= \begin{cases}
%1, \quad \text{ if } \quad \lambda^{k}(T_I) \geq b \\
%0, \quad \text{ if } \quad \lambda^{k}(T_I) \leq -a
%\end{cases}   \text{for every} \;  k \in [K].
%\end{split} 
%\end{align}
%Therefore, the proposed procedure  can be interpreted as a combination of  the intersection rule and two  gap-like rules, and for this reason  we will refer to it  as the ``gap-intersection'' rule. 

\begin{comment}
As before, we start with establishing the finiteness of the gap-intersection procedure.
\begin{lemma}\label{GI_finite}
Suppose assumption~\eqref{reg_lr} holds. Then for any $A \in \cP_{\ell,u}$ and any selection of thresholds,
\begin{align*}
\Pro_{A}(T_{GI} < \infty) = 1.
\end{align*}
\end{lemma}
\begin{proof}
The proof can be found in Appendix~\ref{proof:finite_T_GI}.
\end{proof}
\end{comment}

The following theorem   shows 
%that the  gap-intersection rule terminates almost surely and shows 
how  to select thresholds $a,b,c,d$ in order to guarantee the desired error control for the gap-intersection rule. 

\begin{theorem} \label{error_control_LU} 
Suppose that assumption~\eqref{reg_lr} holds. 
For any subset $\cA \in \cP_{\ell,u}$ and positive thresholds $a,b,c,d$, we have $\Pro_{A}(T_{GI} < \infty) = 1$ and 
\begin{align}   \label{bound2} 
\begin{split}
\Pro_\cA( \cA \lesssim d_{GI} )  &\, \leq  |\cA^c|  \, \left( e^{-b} +|\cA| \,   e^{-c} \right), \\
\Pro_\cA( d_{GI} \lesssim \cA )&\,\leq    |\cA| \,  \left(e^{-a} +  |\cA^c| \,  e^{-d} \right). 
\end{split}
\end{align}
%\leq  (|\cA^c|/K)  \;  K e^{-b} +  (|\cA|/K)  \; (K-\ell)K   e^{-c}   \\
%\leq    (|\cA|/K)  \,  Ke^{-a} +   (|\cA^c|/K)  \,  u K e^{-d} . 
In particular, $(T_{GI},d_{GI}) \in \Delta_{\alpha,\beta}(\cP_{\ell,u})$ when  the thresholds $a,b,c,d$ are selected as follows:
%\begin{align} \label{equation: error_control_LU}
%a =|\log(\beta / K)| ,\; b=|\log(\alpha / K) | ,\; c = |\log\{\alpha [(K-l)K]^{-1}\}|, \; d = |\log[\beta (uK)^{-1}]|
%\end{align}
%\ys{I'm sorry about the confusion; your original order is actually better. I've changed to your order in version 5}
\begin{align} 
\begin{split}
a &= |\log \beta|+ \log K ,  \quad d = |\log \beta| + \log (u K), \\
b &= |\log \alpha|+ \log K, \quad  c =|\log \alpha| + \log ((K-\ell)K) .
\end{split} \label{thres_LU} 
\end{align}
\end{theorem}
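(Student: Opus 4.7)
The plan is to mirror the proof of Theorem~\ref{Error_control_TG}: establish almost sure finiteness via the bound $T_{GI} \leq \tau_2$, then decompose each familywise error event into a union of elementary events indexed by pairs $(j,k)$ with $j \notin \cA$ and $k \in \cA$, and control each piece through the Wald likelihood-ratio identity exactly as in~\eqref{com}.

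For almost sure finiteness under $\Pro_\cA$, I would invoke~\eqref{reg_lr}: under $\Pro_\cA$, $\lambda^k(n) \to +\infty$ for $k \in \cA$ and $\lambda^k(n) \to -\infty$ for $k \notin \cA$, so eventually $p(n) = |\cA| \in [\ell, u]$, the top $|\cA|$ statistics exceed $b$, and the bottom $K - |\cA|$ lie below $-a$; hence $\tau_2 < \infty$ and $T_{GI} \leq \tau_2 < \infty$ almost surely.

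For the type I bound I fix $j \notin \cA$ and analyze $\{j \in d_{GI}\}$ by which stopping criterion is satisfied at $T_{GI}$. If $\tau_2$'s criterion is met, then all statistics lie outside $(-a,b)$, $p' = p(T_{GI})$, and $d_{GI}$ is the set of positive ones, so $j \in d_{GI}$ forces $\lambda^j(T_{GI}) \geq b$; if $\tau_3$'s criterion is met, then $p' = u$, $d_{GI}$ is the top $u$, and again $\lambda^j(T_{GI}) \geq \lambda^{(u)}(T_{GI}) \geq b$. If only $\tau_1$'s criterion is met, then $\lambda^{(\ell+1)}(T_{GI}) \leq -a$ forces $p(T_{GI}) \leq \ell$, whence $p' = \ell$ and $d_{GI}$ is the top $\ell$; since $|\cA| \geq \ell$ and $j \in d_{GI} \setminus \cA$, at least one $k \in \cA$ sits outside the top $\ell$, giving $\lambda^j(T_{GI}) - \lambda^k(T_{GI}) \geq \lambda^{(\ell)}(T_{GI}) - \lambda^{(\ell+1)}(T_{GI}) \geq c$. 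This yields the containment $\{j \in d_{GI}\} \subseteq \{\lambda^j(T_{GI}) \geq b\} \cup \bigcup_{k \in \cA} \{\lambda^j(T_{GI}) - \lambda^k(T_{GI}) \geq c\}$. Wald's identity under the measure change $\cA \mapsto \cA \cup \{j\}$ bounds the first event by $e^{-b}$ and under $\cA \mapsto (\cA \setminus \{k\}) \cup \{j\}$ bounds each gap event by $e^{-c}$, exactly as in~\eqref{com}. Summing over $j \notin \cA$ and $k \in \cA$ produces the first inequality in~\eqref{bound2}. The type II bound is entirely symmetric: for $k \in \cA$, the constraint $|\cA| \leq u$ ensures that when only $\tau_3$'s criterion is met and $k \notin d_{GI}$, some $j \notin \cA$ lies in the top $u$, giving $\lambda^j(T_{GI}) - \lambda^k(T_{GI}) \geq d$, while the other two cases force $\lambda^k(T_{GI}) \leq -a$; Wald's identity then produces $e^{-a}$ and $e^{-d}$ per term.

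Finally, substituting the thresholds~\eqref{thres_LU} into~\eqref{bound2}, the type I bound becomes $\alpha (K-|\cA|)[(K-\ell) + |\cA|] / [K(K-\ell)]$, which is $\leq \alpha$ iff $(K-|\cA|)|\cA| \leq |\cA|(K - \ell)$, i.e., iff $|\cA| \geq \ell$; similarly the type II bound becomes $\beta |\cA|[u + K - |\cA|]/(K u)$, which is $\leq \beta$ iff $|\cA| \leq u$. Both hold since $\cA \in \cP_{\ell,u}$. The main obstacle I anticipate is the $\tau_1$/$\tau_3$ case analysis: the existence of the witness $k \in \cA$ outside the top $\ell$ (resp.\ $j \notin \cA$ inside the top $u$) is exactly where the constraints $|\cA| \geq \ell$ and $|\cA| \leq u$ enter the proof, and the same constraints reappear at the threshold-selection stage to pin down the form of $c$ and $d$ in~\eqref{thres_LU}.
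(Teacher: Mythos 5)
Your proof is correct and follows essentially the same route as the paper's: finiteness via $T_{GI}\leq\tau_2$ under~\eqref{reg_lr}, a case-by-case containment of each elementary error event into threshold-crossing events (you detail the type I side where the paper details type II and says ``similarly''), Wald's likelihood ratio identity with one- and two-stream changes of measure exactly as in~\eqref{com}, and Boole's inequality, with the constraints $|\cA|\geq\ell$, $|\cA|\leq u$ entering the final threshold verification just as in the paper's weighted bound.
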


\begin{proof}
Fix  $\cA \in \cP_{\ell,u}$ and $a,b,c,d>0$. Observe that $T_{GI} \leq \tau_2 \leq \tau_2'$, where 
\begin{align}\label{tau2_prime}
\tau_2' = \inf\{n \geq 1: -\lambda^{j}(n) \geq a,\; \lambda^{k}(n) \geq b \; \; \text{for every} \;  k \in \cA, j \notin \cA \}.
\end{align}
Due to assumption~\eqref{reg_lr},  $\Pro_\cA(\tau_2' < \infty) = 1$, which proves that $T_{GI}$ is also almost surely finite under $\Pro_\cA$. 
We now focus on proving the bound in \eqref{bound2} for the  familywise type-II error probability, since the corresponding result for the  familywise type-I error  can be shown similarly.   From Boole's inequality we have
\begin{align}   \label{error}
\Pro_\cA( d_{GI} \lesssim \cA )= \Pro_\cA \left( \bigcup_{k \in \cA} \{d_{GI}^{k} = 0\} \right) \leq \sum_{k \in \cA}  \Pro_\cA \left(d_{GI}^k =0 \right).
\end{align}
Fix $k \in \cA$.  Whenever the \luName rule  mistakenly accepts  $H_0^k$,  either the event $\Gamma_k:=\{\lambda^{k}(T_{GI}) \leq -a\}$ occurs (which is the case when stopping  at  $\tau_1$ or $\tau_2$), or  there is at least one $j \notin \cA$ such that the event   $\Gamma_{k,j}:=\{\lambda^{j}(T_{GI}) - \lambda^{k}(T_{GI})\geq d\}$ occurs (which is the case when stopping  at $\tau_3$). Therefore,
%$\{d_{GI}^{k} = 0\}\subset  \Gamma_{k}  \cup \{ \Gamma_{k,j} \; \; \text{for some} \;  j \notin \cA \}$,
$$\{d_{GI}^k = 0\}\subset  \Gamma_{k}  \cup (\cup_{j \notin \cA} \Gamma_{k,j}),$$
and from Boole's inequality we have
\begin{align*}
\Pro_\cA(d_{GI}^k = 0 ) \leq  \Pro_\cA ( \Gamma_k )  +   \sum_{j \notin \cA} \Pro_\cA\left( \Gamma_{k,j} \right).
\end{align*}  
Identically to~\eqref{com} we can show that for every $j \notin \cA$ we have 
 $\Pro_\cA\left(\Gamma_{k,j}\right) \leq  e^{-d}.$  Moreover, 
if we set $ \MC= A\setminus \{k\}$ (note that $C \notin \cP_{\ell,u}$, but this does not affect our argument),  then $\lambda^{\cA, \MC}= \lambda^{k}$ and from Wald's likelihood ratio identity we have
\begin{align*}
\Pro_\cA ( \Gamma_k  )&= \Exp_\MC \left[ \exp\{\lambda^{\cA,\MC}(T_{GI})\}; \Gamma_k \right]  = \Exp_\MC \left[ \exp\{\lambda^{k}(T_{GI})\}; \Gamma_k \right]   \leq e^{-a}.
\end{align*}
Thus,
$$\Pro_\cA (d^{k}_{GI} = 0) \leq e^{-a} + (K - |\cA|) e^{-d},$$
 which together with \eqref{error} yields
\begin{align*}
\Pro_\cA( d_{GI} \lesssim \cA  )   &\, \leq    |\cA|   (e^{-a} +  |\cA^c|  e^{-d} )\leq  \frac{|\cA|}{K}  (K e^{-a}) +   \frac{|\cA^c|}{K}  (u K e^{-d}).
\end{align*}
Therefore,  if the thresholds are selected according to~\eqref{thres_LU}, then
$K e^{-a}=\beta$ and  $u K e^{-d}= \beta,$
which implies  that
$$\Pro_\cA( d_{GI} \lesssim \cA  )  \leq \frac{|\cA|}{K}  \beta +   \frac{|\cA^c|}{K}  \beta= \beta,
$$
and  the proof is complete. %completes the proof.
%of the upper bound for 
%$\MFII_{\cA} (\delta_{GI})$. 
%Finally, if we set  $a = |\log(\beta/K)|$ and $d = |\log(\beta/(u \cdot K))|$??? in ..., then 
%\begin{align*}
%\MFII_{\cA} (\delta_{GI})\leq |\cA|\left[ \frac{\beta}{K}+ \frac{(K - |\cA|)\beta}{uK} \right]
%\leq  \beta \frac{u}{K}+ \frac{K-ell}{K} \cdot (\frac{|\cA|}{u}-1) \leq \beta
%\end{align*}
%where we use the fact that $|\cA| \leq u$.
\end{proof}
 % at which the  intersection rule fails to be asymptotically optimal in general.  % and  called the  ``intersection rule", since it  requires all  statistics to run  until they exit their continuous sampling regions simultaneously. 

\section{Computation of familywise error probabilities via importance sampling}\label{is}

The threshold specifications  in~\eqref{thres} and~\eqref{thres_LU} guarantee the desired  error control for the gap rule and gap-intersection rule respectively,  however they can be very conservative.
% for practical purposes. %,  as they typically  lead to  familywise type I and type II  error probabilities that are much smaller than the corresponding target levels $\alpha$ and $\beta$. 
In practice, it is preferable to use  Monte Carlo simulation  to determine the thresholds that equate (at least, approximately)  the \emph{maximal} familywise type I and type II error probabilities to the corresponding target levels $\alpha$ and $\beta$, respectively. Note that this needs to be done offline, before the implementation of the procedure.  

When $\alpha$ and $\beta$ are very small, the corresponding errors are ``rare events'' and plain  Monte Carlo will not be efficient.  For this reason, in this section we propose 
a Monte Carlo approach based on  \textit{importance sampling}
for the efficient computation of the  familywise  error probabilities of the proposed multiple testing procedures.

To be more specific,  let $\cA \subset [K]$  be the true subset of signals and consider the  computation   of the familywise type I error probability,  $\Pro_\cA(\cA \lesssim d)$, of an arbitrary multiple testing procedure, 
$(T,d)$.  The idea of importance sampling is to find a  probability measure $\Pro_{\cA}^*$, under which the stopping time $T$ is finite almost surely,  and compute the desired probability by estimating (via plain Monte Carlo)  the expectation in the right-hand side of the following identity:
\begin{align*}
 \Pro_\cA( \cA \lesssim d) = \Exp^{*}_{\cA}\left[  (\Lambda^{*}_\cA)^{-1}; \cA \lesssim d\right],
\end{align*}
which is obtained by an  application of  Wald's likelihood ratio identity. Here,  we denote by  $\Lambda_{\cA}^{*}$ the likelihood ratio of $\Pro^{*}_{\cA}$ against $\Pro_\cA$ at time $T$, i.e., 
\begin{align*} %\label{change}
\Lambda^*_\cA =  \frac{d \Pro^*_\cA}{d\Pro_\cA}(\MF_T),
\end{align*}
and by $\Exp^{*}_{\cA}$ the expectation under $\Pro^{*}_{\cA}$. The proposal distribution $\Pro^{*}_{\cA}$ should be selected such that $\Lambda^*_\cA$ is ``large'' on the event $\{\cA \lesssim d\}$ and ``small'' on its complement. This intuition will  guide us in the selection of  $\Pro^{*}_{\cA}$ for the proposed rules. 
%To motivate the selection, let us assume for the moment that the  hypotheses are identical, in the sense that  $\Pro_1^k=\Pro_1$ and $\Pro_0^k=\Pro_0$ for every $k \in [K]$. 

%\subsection*{Gap rule}
For the gap rule $(T_G, d_G)$  we suggest the proposal distribution  to be a  uniform mixture  over  $\{\Pro_{\cA \cup \{j\} \setminus \{k\} }, k \in \cA,  j \notin \cA\}$, i.e., 
 \begin{align} \label{gap_measure}
\Pro^{G}_{\cA} := \frac{1}{|\cA|\,  |\cA^c|} \sum_{k \in \cA} \sum_{j \notin \cA} \Pro_{\cA \cup \{j\} \setminus \{k\}},
\end{align}
whose  likelihood ratio against $\Pro_\cA$  at time $T_G$ is 
\begin{align*}
\Lambda^G_\cA &:= \frac{1}{|\cA|  \, |\cA^c|} \sum_{k \in \cA} \sum_{j \notin \cA} \exp\{\lambda^{j}(T_G) - \lambda^{k}(T_G) \}.
\end{align*}
%and $\Exp^{G}_{\cA}$ is expectation under $\Pro^{G}_{\cA}$. %To explain the intuition about this choice of IS distribution, let us explain that the second moment (and consequently the variance) of the resulting IS estimator of the error probability , $$\Exp^{G}_{\cA}\left[  (\Lambda^{G}_\cA)^{-2}; d_{G} \neq \cA \right]$$ will be  small. 
Then, on the event  $\{\cA \lesssim d_{G}\}$  there exists  some $k \in \cA$ and $j \notin \cA$ such that $\lambda^{j}(T_G) - \lambda^{k}(T_G) \geq c$, which leads to a large value for $\Lambda^G_\cA$.  On the other hand,  on the complement of $\{\cA \lesssim d_{G}\}$, $\{d_G = \cA\}$, 
we have  $\lambda^{j}(T_G) - \lambda^{k}(T_G) \leq - c$  for every $k \in \cA, j \notin \cA$,  which leads to a value  of  $\Lambda^{G}_\cA$ close to 0.

For the intersection rule $(T_I, d_I)$ we suggest the  proposal distribution to be a uniform mixture  over  $\{\Pro_{\cA \cup \{j\}},  j \notin \cA\}$, i.e., 
\begin{align} \label{intersection_measure}
\Pro_{\cA}^{I} := \frac{1}{ |\cA^c|} \sum_{j \notin \cA} \Pro_{\cA \cup \{j\}},
\end{align}
whose  likelihood ratio  against $\Pro_{\cA}$ at time $T_I$ takes the form
$$
\Lambda^{I}_\cA := \frac{1}{ |\cA^c|} \sum_{j \notin \cA}  \exp\{\lambda^{j}(T_I) \}.
$$
Note that on the event $\{\cA \lesssim d_I\}$ there exists some $j \notin A$ such that $\lambda^{j}(T_I) \geq b$, which results in a large  value for $\Lambda^{I}_\cA$. On the other hand, on the complement of $\{\cA \lesssim d_I\}$ % $\{d_I \setminus \cA = \emptyset\}$  
we have  $\lambda^{j}(T_I) \leq -a$ for every $j \notin \cA$, which results in a  value of  $\Lambda^{I}_\cA$ close to 0.  

Finally, for the gap-intersection rule we suggest to use $\Pro_\cA^{I}$, the same proposal  distribution as in the intersection rule, when $\ell < |\cA| < u$. In the boundary case, i.e. $ |\cA|=\ell$ or $|\cA|= u$,  we propose the following  mixture of $\Pro_{\cA}^{G}$ and $\Pro_{\cA}^{I}$: 
\begin{align*}
\Pro^{GI}_{\cA} :=   \frac{|\cA|}{1+|\cA|} \,  \Pro_{\cA}^G + \frac{1}{1+|\cA|} \, \Pro_{\cA}^I. 
\end{align*}
%to work the  best in practice.
%For $\ell < |\cA| < u$, with high probability $T = \tau_2$, and that's why we choose $\Pro_{I}^{*}$. For the boundary case, it's a linear combination of $\Pro_{G}^*$ and $\Pro_{I}^*$; we don't have a good theory for its use, but it results in acceptable variance in practice.

%\begin{remark}
In Section~\ref{simulation} we apply the  proposed simulation approach for the specification of non-conservative  thresholds in the case of identical, symmetric hypotheses with Gaussian i.i.d. data. 
% When we have non-identical hypothesis testing problems, we believe that the proposal distributions   $\Pro_\cA^{G}$ and $\Pro_\cA^{I}$,  defined in \eqref{gap_measure} and \eqref{intersection_measure},   can be  improved if we select instead  uniform mixtures over  $\{\Pro_{\cA \cup \{j\} \setminus \{k\} },  k \in \cA_1,  j \in \cA_0\}$ and $\{\Pro_{\cA \cup \{j\}},   j \in \cA_0\}$  respectively,  where $\cA_1$ and $\cA_0$  are the subsets of the  most ``difficult'' hypotheses in $\cA$ and  $ \cA^c$, respectively.  
We also refer to~\cite{simulation2016} for an analysis of these importance sampling estimators.  
%preliminary justifications of such proposal distributions, where logarithmic efficiency is established in computing similar events.

%It is an interesting topic for future research, but beyond the scope of the present paper,  to determine  the subsets  $\cA_1$ and  $\cA_0$ and establish any optimality properties of the above proposal distributions. 
%\end{remark}

% We can make this more rigorous in the iid case. To be more specific,  recall the definition of $\eta_{1}^{\cA}$ and $\eta_{0}^{\cA}$  in \eqref{eta_def}, and  define the subsets $\cA_1 \subset \cA$ and $\cA_0 \subset \cA^c$ such that 
%\begin{align*}
% \eta_1^{\cA}=  D_1^k & \Leftrightarrow k \in \cA_1 \\
%  \eta_0^{\cA}= D_0^j & \Leftrightarrow j \in \cA_0.
% \end{align*}
% In other words, the streams in $\cA_1$ are the streams in which signal is present but has the smallest possible magnitude. Similarly, the streams in $\cA_1$ are the streams in which there is no signal and the anticipated signal has the the smallest magnitude.
% Then, we suggest  replacing  the sums in \eqref{gap_measure} and \eqref{intersection_measure}   with $k \in \cA_1$ and $j \in \cA_0$, however a more rigorous study is needed under investigation. 

\section{Asymptotic optimality in the i.i.d. setup} \label{theory}
From now on, we assume that, for  each stream  $k \in [K]$,   the observations $\{X_n^{k}, n \in \bN\}$  are  independent random variables with common density $f_i^k$ with  respect to  a $\sigma$-finite measure $\mu^k$ under $\Pro_i^k$, $i=0,1$,
%In this context, the  hypothesis testing problem~\eqref{hypo} takes the form 
% \begin{equation*} 
%H_0^k\;: f^k = f_0^k \; \text{ versus }\; H_1^k\;: f^k = f_1^k,
%\end{equation*} 
 such that the  Kullback–--Leibler information numbers 
\begin{equation*}
D_0^k := \int \log\left(\frac{f_0^k}{f_1^k}\right) f_0^k d\mu^k, \quad
D_1^k := \int \log\left(\frac{f_1^k}{f_0^k}\right) f_1^k d\mu^k
\end{equation*}
are both positive and finite.   As a result,  for each $k \in [K]$  the  log-likelihood ratio process  in the $k^{th}$ stream, defined in \eqref{llr}, takes the form 
\begin{equation*}
\lambda^{k}(n) =  \sum_{j=1}^{n} \log \frac{f_1^k(X_j^k)}{f_0^k(X_j^k)}, \quad n \in \bN,
\end{equation*}
and it is a   random walk with  drift  $D_1^k$ under $\Pro_1^k$ and $-D_0^k$ under $\Pro_0^k$.  

Our goal  in this section is to show that the proposed multiple testing procedures in Section \ref{procedures}  are asymptotically optimal.  Our strategy for proving this is first to establish a \textit{non-asymptotic} lower bound on the minimum possible expected sample size  in $\Delta_{\alpha,\beta}(\cP)$ for some arbitrary class $\cP$, and then show that this lower bound is  attained  by the gap rule when  $\cP=\cP_m$ and by the gap-intersection rule when $\cP=\cP_{\ell,u}$  as  $\alpha, \beta \rightarrow  0$.

%such that 
%$\alpha\log(\beta) + \beta\log(\alpha) \to 0.$
%, and consequently 
%\begin{equation} \label{phi_lemma}
% \varphi(\alpha,\beta)  =-\log (\beta) +o(1), \quad  \varphi(\beta, \alpha)  =-\log (\alpha) +o(1).
% \end{equation}
%\begin{lemma}\label{phi_lemma}
%Let function $\varphi$ be defined in~\eqref{phi}. As $\max\{x,y\} \to 0$ so that $x\log(y) \to 0 $, we have $ \varphi(x,y) =- \log(y)  + o(1)$, where $o(1)$ is a vanishing term as $x,y \to 0$.\end{lemma}
%This is a minor restriction  on the rates at which $\alpha$ and $\beta$ approach zero, and is satisfied for example when $\beta =c_1 \alpha^{c_2}$  for  arbitrary positive constants $c_1,c_2$. Therefore,   our asymptotic optimality results will apply even when $\alpha$ and $\beta$ are of very  different orders of magnitude, allowing  for a very asymmetric treatment of the two types of error, which may be desirable in many applications.    %Moreover, from a technical point of view, this assumption is the minimal requirement  for an asymptotic analysis  in sequential hypothesis testing, as can be seen even  simplest case....
 
\subsection{A lower bound on the optimal performance}
In order to state the lower bound on the optimal performance, we introduce the  function
 \begin{equation} \label{phi}
 \varphi(x,y) := x\log\left(\frac{x}{1-y} \right) + (1-x) \log\left(\frac{1-x}{y} \right), \quad x, y \in (0,1),
 \end{equation}
and for any  subsets  $\MC , \cA \subset [K]$ such that  $\MC \neq \cA$ we set
\begin{equation*} %\label{gamma_val}
\gamma_{\cA,\MC}(\alpha,\beta) := \begin{cases}
\varphi(\alpha,\beta), \quad &\text{ if } \MC\setminus \cA \neq \emptyset , \; \cA\setminus \MC = \emptyset, \\
\varphi(\beta,\alpha),   &\text{ if } \MC\setminus \cA = \emptyset,\; \cA\setminus \MC \neq \emptyset, \\
\varphi(\alpha,\beta) \vee \varphi(\beta,\alpha), \quad &\text{ otherwise}.
\end{cases}
\end{equation*}
%We are now ready to state the lower bound on the optimal performance.

\begin{theorem} \label{lower_bound}
For any class $\cP$,  $\cA \in \cP$ and $\alpha, \beta \in (0,1)$ such that  $\alpha + \beta < 1$ we have
\begin{align}\label{lower_bound_formula}
\inf_{(T,d) \in \Delta_{\alpha,\beta}(\cP)} \Exp_\cA [T]
 \geq 
\max_{\MC \in \cP, \MC \not = \cA} \, 
\frac{\gamma_{\cA,\MC}(\alpha,\beta)}
{\sum_{k \in \cA\setminus \MC} D_1^{k} +
\sum_{k \in \MC\setminus \cA} D_0^k} .
\end{align}
\end{theorem}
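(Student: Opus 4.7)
The plan is to prove the bound for each fixed alternative $\MC \in \cP \setminus \{\cA\}$ separately by a change-of-measure argument, and then to take the maximum over $\MC$. Fix such an $\MC$ and any procedure $(T,d) \in \Delta_{\alpha,\beta}(\cP)$; we may assume $\Exp_\cA[T] < \infty$, as otherwise the bound is trivial.

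First I would compute the expected terminal log-likelihood ratio between $\Pro_\cA$ and $\Pro_\MC$. Using the i.i.d.\ structure in each stream together with \eqref{ll_diff}, the process $\lambda^{\cA,\MC}(n)$ is a random walk under $\Pro_\cA$ whose one-step drift equals
$$I_{\cA,\MC} \;:=\; \sum_{k\in\cA\setminus\MC} D_1^k \;+\; \sum_{k\in\MC\setminus\cA} D_0^k,$$
because for $k\in\cA$ (resp.\ $k\notin\cA$) the observations have density $f_1^k$ (resp.\ $f_0^k$) under $\Pro_\cA$. Wald's identity then yields $\Exp_\cA[\lambda^{\cA,\MC}(T)] = I_{\cA,\MC}\cdot \Exp_\cA[T]$.

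Next I would lower-bound $\Exp_\cA[\lambda^{\cA,\MC}(T)]$ by a binary KL divergence through the data-processing inequality: for any $E\in\MF_T$,
$$\Exp_\cA[\lambda^{\cA,\MC}(T)] \;\geq\; \psi(p,q) \;:=\; p\log\frac{p}{q} + (1-p)\log\frac{1-p}{1-q},$$
with $p:=\Pro_\cA(E)$ and $q:=\Pro_\MC(E)$. When $\MC\setminus\cA \neq \emptyset$, I would pick any $j\in\MC\setminus\cA$ and take $E=\{d^j=1\}$; then $p\leq\alpha$ (a false positive under $\Pro_\cA$) and $q\geq 1-\beta$ (its complement is a false negative under $\Pro_\MC$, and $\MC\in\cP$). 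Since $\alpha+\beta<1$ gives $p<q$, the monotonicity of $\psi$—strictly decreasing in its first argument and strictly increasing in its second on $\{p<q\}$—implies $\psi(p,q)\geq \psi(\alpha,1-\beta)=\varphi(\alpha,\beta)$. Symmetrically, when $\cA\setminus\MC\neq\emptyset$, choosing $k\in\cA\setminus\MC$ and $E=\{d^k=1\}$ gives $p\geq 1-\beta$, $q\leq\alpha$, and thus $\psi(p,q)\geq \varphi(\beta,\alpha)$.

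The three cases in the definition of $\gamma_{\cA,\MC}(\alpha,\beta)$ correspond exactly to which of these two event choices is available: only the first when $\cA\subsetneq\MC$, only the second when $\MC\subsetneq\cA$, and both (so the maximum) when the two sets cross. Dividing by $I_{\cA,\MC}$ yields the bound for the fixed $\MC$, and maximizing over $\MC\in\cP\setminus\{\cA\}$ gives \eqref{lower_bound_formula}. The main technical delicacies I expect are verifying Wald's identity at the stopping time $T$ (standard, since $\Exp_\cA[T]<\infty$ and the per-step KL numbers are finite) and rigorously establishing the monotonicity of $\psi$ on $\{p<q\}$, which hinges on the assumption $\alpha+\beta<1$.
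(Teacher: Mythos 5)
Your proposal is correct and follows essentially the same route as the paper: Wald's identity applied to the random walk $\lambda^{\cA,\MC}$ under $\Pro_\cA$, an information-theoretic lower bound on $\Exp_\cA[\lambda^{\cA,\MC}(T)]$ via the decision event $\{d^j=1\}$ for a stream $j$ in the symmetric difference, monotonicity of the resulting binary divergence under $\alpha+\beta<1$, and the same case analysis giving $\gamma_{\cA,\MC}$. Your ``data-processing'' bound $\psi(p,q)$ with $p=\Pro_\cA(E)$, $q=\Pro_\MC(E)$ is exactly the paper's Lemma~\ref{KL_bound} in a different parametrization, since $\varphi\left(\Qro(A),\Pro(A^c)\right)=\psi\left(\Qro(A),\Pro(A)\right)$.
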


\begin{proof} %[Proof of Theorem \ref{lower_bound}]
Fix  $(T,d) \in \Delta_{\alpha,\beta}(\cP)$ and $\cA \in \cP$. 
Without loss of generality, we  assume that $\Exp_\cA [T] < \infty$. 
%,  let   $\lambda^{\cA,\MC}$ be the log-likelihood ratio  process of $\Pro_{\cA}$ versus $\Pro_{\MC}$, i.e.,  %and by  $\lambda^{\MC,\cA}$ the corresponding log-likelihood ratio process, i.e.,
%\begin{equation}\label{ll_diff}
%\lambda^{\cA,\MC}(n) =  \log  \frac{d\Pro_{\cA}}{d\Pro_{\MC}}  (\MF_n)= %\sum_{k \in \cA\setminus \MC} \lambda^{k}(n) -
%\sum_{k \in \MC\setminus \cA} \lambda^{k}(n), \quad n \in \mathbb{N},
%\end{equation}
%where the second equality is due to~\eqref{product}. 
For any  $\MC \in \cP$ such that $\MC\neq \cA$,  the  log-likelihood ratio process   $\lambda^{\cA,\MC}$, defined in \eqref{ll_diff},  is a  random walk under $\Pro_{\cA}$  with drift  equal to  
$$
\Exp_{\cA}[ \lambda^{\cA,\MC}(1)] = \sum_{k \in \cA\setminus \MC} D_1^{k} + \sum_{k \in \MC\setminus \cA} D_0^{k},
$$
since each  $\lambda^{k}$ is a   random walk with  drift  $D_1^k$ under $\Pro_1^k$ and $-D_0^k$ under $\Pro_0^k$. Thus,  from  Wald's identity  it follows that
\begin{equation*}  
\Exp_\cA [T]  = \frac{\Exp_\cA [\lambda^{\cA,\MC}(T) ]}
{\sum_{k \in \cA\setminus \MC} D_1^{k} + \sum_{k \in \MC\setminus \cA} D_0^k},
\end{equation*}
and  it suffices to show that  for any $\MC \in \cP$ such that $\MC \neq \cA$ 
we have 
\begin{equation} \label{LBB}
\Exp_\cA [\lambda^{\cA,\MC} (T) ] \geq \gamma_{\cA, \MC} (\alpha,\beta). \end{equation} 
Suppose that $\MC\setminus \cA \neq \emptyset$ and let  $j \in \MC\setminus \cA$. Then, from Lemma~\ref{KL_bound}  in the Appendix  we have
\begin{align*}
\Exp_\cA \left[\lambda^{\cA,\MC}(T) \right] &= \Exp_\cA \left[\log  \frac{d\Pro_\cA}{d\Pro_\MC} (\MF_T)\right] 
%= \Exp_Q \left(\log \frac{dQ}{d\check{Q}}\right) 
\geq \varphi \left(\Pro_\cA(d^{j} = 1), \Pro_\MC(d^{j} = 0)  \right).
\end{align*}
By the definition of  $\Delta_{\alpha,\beta}(\cP)$,  we have 
$\Pro_\cA(d^{j} = 1) \leq \alpha$ and   $\Pro_{\MC}(d^{j} = 0) \leq \beta$. Since the function $\varphi(x,y)$ is decreasing on the set $\{(x,y): x+y \leq 1\}$, and by assumption $\alpha + \beta \leq 1$, we conclude that if $\MC \setminus \cA \neq \emptyset$, then
$$
\Exp_\cA [\lambda^{\cA,\MC}(T) ] \geq \varphi(\alpha,\beta).
$$
With a symmetric argument  we can show that if $\cA  \setminus \MC \neq \emptyset$, then
$$
\Exp_\cA [\lambda^{\cA,\MC}(T) ] \geq \varphi(\beta,\alpha).
$$
 The two last inequalities imply \eqref{LBB}, and this  completes the proof. 
%where the quantity $\gamma_{\cA, \MC} (\alpha,\beta)$ is defined in \eqref{gamma_val}. From Wald's Identity  and \eqref{single_comparison_control} it follows that
%\begin{equation*}
%\Exp_\cA[T]  = \frac{\Exp[\lambda^{\MC,\cA}(T)]}
%{\sum_{k \in \cA\setminus \MC} D_1^{k} + \sum_{k \in \MC\setminus \cA} D_0^k}
%\geq  \frac{\gamma_{\cA, \MC} (\alpha,\beta)}
%{\sum_{k \in \cA\setminus \MC} D_1^{k} +\sum_{k \in \MC\setminus \cA} %D_0^k} .
%\end{equation*}
%Since $\MC$ is an arbitrary subset in $\cP$ such that $\MC \neq \cA$,  we obtain the desired result optimizing the lower bound with respect to $\MC$ . 
\end{proof}
\begin{remark}\label{remark_on_phi}
By the definition of $\varphi$ in~\eqref{phi}, we have
\begin{equation}\label{phi_lemma}
\varphi(\alpha,\beta) = |\log \beta|\,(1 + o(1)),\quad 
\varphi(\beta,\alpha) = |\log \alpha \,|(1 + o(1))
\end{equation}
 as $\alpha,\beta \to 0$ at arbitrary rates. 
%which, combined with above theorem, establishes an \textit{asymptotic} lower bound.
\end{remark}

%\begin{remark}
%The lower bound is non-asymptotic, which could be important if we consider the asymptotic regime where $K \to \infty$.
%We'll simplify the lower bound for specific $\cP$ in following sections.
%\end{remark}

\subsection{Asymptotic optimality of the proposed schemes}

In what follows, we assume that for each stream $k \in [K]$ we have:
 \begin{equation} \label{second}
  \int \left( \log \left(\frac{f_0^k}{f_1^k} \right) \right)^2 f_i^k d\mu^k < \infty, \quad  i=0,1.
 \end{equation}
Although this assumption is not necessary for the asymptotic optimality of the proposed rules to hold,  it will allow us to use Lemma \ref{aux} in the Appendix and  obtain valuable insights regarding the effect of  %the number of hypotheses and the 
prior information on the optimal  performance. Moreover, for each subset $\cA\subset [K]$ we set:
\begin{equation*}
\eta_{1}^{\cA} := \min_{k \in \cA} D_1^k ,\qquad \eta_{0}^{\cA} := \min_{j \notin \cA} D_0^j,
\end{equation*}
and, following  the convention that the minimum over the empty set is $\infty$, we define:
 $\eta_1^{\emptyset}= \eta_0^{[K]}:= \infty$.

\subsubsection{Known number of signals} \label{sec: known_size}
We will first show that the gap rule, defined in \eqref{gap}, is asymptotically optimal with respect to class $\cP_m$, where  $1 \leq m \leq K-1$.  In order to do so,  we start  with  an  upper bound on the  expected sample size of this procedure.  
\begin{lemma}\label{ESS_TG}
Suppose that assumption ~\eqref{second} holds. Then, for any $\cA \in \cP_{m}$, as $c \to \infty$ we have
\begin{equation*} %\label{upper_bound_TG}
\Exp_\cA[T_G]  \leq \frac{c}{  \eta_1^{\cA} +
 \eta_0^{\cA} }  + O\left( m (K-m) \sqrt{c} \right).
\end{equation*}
\end{lemma}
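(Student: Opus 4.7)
The plan is to first replace $T_G$ by the easier-to-analyze auxiliary stopping time $T_G'$ introduced in the proof of Theorem~\ref{Error_control_TG}, and then exploit its structure as a ``maximum'' over suitably chosen pairwise events. By~\eqref{T_G_prime} we already have $T_G \le T_G'$ almost surely under $\Pro_\cA$, so it suffices to bound $\Exp_\cA[T_G']$. Recall that $T_G' = \inf\{n\ge 1 : \lambda^k(n) - \lambda^j(n) \ge c \text{ for all } k\in\cA,\, j\notin\cA\}$. Under $\Pro_\cA$, stream independence together with the i.i.d.\ assumption make each process $\lambda^k-\lambda^j$ (with $k\in\cA,\,j\not\in\cA$) a random walk with positive drift $\mu_{k,j}:=D_1^k+D_0^j$ and, thanks to~\eqref{second}, finite-variance increments; note that $\min_{k,j}\mu_{k,j}=\eta_1^\cA+\eta_0^\cA$.

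Next, I would bound $T_G'$ by a ``last-exit'' quantity rather than by a naive first-passage. Because $T_G'$ is the first time all $m(K-m)$ pairwise gaps lie above $c$ \emph{simultaneously}, it is \emph{not} equal to $\max_{k,j}\tau_{k,j}$ with $\tau_{k,j}:=\inf\{n:\lambda^k(n)-\lambda^j(n)\ge c\}$: after its first crossing, the walk $\lambda^k-\lambda^j$ may dip back below $c$, and in fact one only has $\max_{k,j}\tau_{k,j}\le T_G'$ (the wrong direction). What does work is to introduce the last-exit times
$L_{k,j}:=\sup\{n\ge 0:\,\lambda^k(n)-\lambda^j(n)<c\}$, with $\sup\emptyset:=0$, which are $\Pro_\cA$-a.s.\ finite by positive drift, and to use the monotone inequality $T_G'\le \max_{k,j}L_{k,j}+1$.

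The final step is a max-to-sum decomposition coupled with renewal-theoretic estimates. Fix a pair $(k^*,j^*)$ achieving $\mu_{k^*,j^*}=\eta_1^\cA+\eta_0^\cA$. The elementary bound $\max_i X_i \le X_{i_0}+\sum_i (X_i-X_{i_0})^+$ applied with $i_0=(k^*,j^*)$, combined with the observation that $\mu_{k,j}\ge\mu_{k^*,j^*}$ forces $c/\mu_{k,j}\le c/\mu_{k^*,j^*}$, yields
\begin{equation*}
T_G' \;\le\; 1+L_{k^*,j^*}+\sum_{(k,j)\ne(k^*,j^*)}\bigl[(L_{k,j}-c/\mu_{k,j})^+ + (c/\mu_{k^*,j^*}-L_{k^*,j^*})^+\bigr].
\end{equation*}
Lemma~\ref{aux} in the Appendix, which is precisely the sort of renewal/Anscombe-type estimate that requires~\eqref{second}, should then deliver $\Exp_\cA[L_{k^*,j^*}]\le c/\mu_{k^*,j^*}+O(\sqrt{c})$ together with $\Exp_\cA[(L_{k,j}-c/\mu_{k,j})^+]=\Exp_\cA[(c/\mu_{k^*,j^*}-L_{k^*,j^*})^+]=O(\sqrt{c})$ for every pair. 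Summing these $O(\sqrt{c})$ corrections over the at most $m(K-m)$ pairs produces the claimed inequality.

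The main obstacle is the second step: the distinction between first-passage and last-exit times. The tempting inequality ``$T_G'\le \max_{k,j}\tau_{k,j}$'' is false (it points the wrong way), so one cannot directly import first-passage renewal bounds; switching to the $L_{k,j}$ repairs this at the cost of dealing with quantities that are not stopping times, which is exactly the regime Lemma~\ref{aux} is designed for. A secondary, but essential, point is to keep the leading coefficient equal to $1/(\eta_1^\cA+\eta_0^\cA)$ rather than the inflated $m(K-m)/(\eta_1^\cA+\eta_0^\cA)$ that the crude bound $\Exp_\cA[\max L_{k,j}]\le\sum\Exp_\cA[L_{k,j}]$ would produce; centering the max-to-sum decomposition at the slowest pair $(k^*,j^*)$ is what saves this.
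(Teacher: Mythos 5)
Your first step coincides with the paper's: bound $T_G\le T_G'$ and observe that under $\Pro_\cA$ each process $\lambda^k-\lambda^j$, $k\in\cA$, $j\notin\cA$, is a random walk with positive drift $D_1^k+D_0^j$ and finite-variance increments, the smallest drift being $\eta_1^\cA+\eta_0^\cA$. Where you diverge is in how $\Exp_\cA[T_G']$ is bounded, and here you have missed the point of Lemma~\ref{aux}: its stopping time $T=\inf\{n\ge 1: S_n^l\ge a_l \text{ for every } l\in[L]\}$ is \emph{exactly} the simultaneous first-crossing time, i.e.\ exactly $T_G'$ with $S^l=\lambda^k-\lambda^j$, $a_l=c$ and $L=m(K-m)$, and the lemma explicitly makes no independence assumption on the walks. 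The paper's entire proof is therefore a one-line application of Lemma~\ref{aux}, giving $\Exp_\cA[T_G']\le c/\min_{k,j}(D_1^k+D_0^j)+O(m(K-m)\sqrt{c})$, which is the claim. Your worry that ``$T_G'\le\max_{k,j}\tau_{k,j}$ is false'' is correct but irrelevant to the application: that difficulty is resolved inside the proof of Lemma~\ref{aux} (via last-exit/excursion arguments, as in Theorem~2 of Mei, 2008), not by the user of the lemma.

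Because of this misreading, your middle steps rest on claims that Lemma~\ref{aux} does not contain. The lemma says nothing about last-exit times $L_{k,j}=\sup\{n:\lambda^k(n)-\lambda^j(n)<c\}$, nor about $\Exp_\cA[L_{k^*,j^*}]$, $\Exp_\cA[(L_{k,j}-c/\mu_{k,j})^+]$ or $\Exp_\cA[(c/\mu_{k^*,j^*}-L_{k^*,j^*})^+]$; as written, those estimates are asserted without proof. They are in fact true under assumption~\eqref{second} (bound $L$ by the first passage time plus the duration of the subsequent last excursion below the crossing level, use Lorden-type and Cauchy--Schwarz/variance bounds to get the $O(\sqrt{c})$ terms), and your inequalities $T_G'\le 1+\max_{k,j}L_{k,j}$ and the max-to-sum decomposition centered at the slowest pair are both valid, so the argument could be completed --- but at that point you would essentially be reproving Lemma~\ref{aux} rather than using it. So the gap is not a wrong idea but a mis-attribution: either cite/derive the renewal facts about last-exit times separately, or, much more simply, apply Lemma~\ref{aux} directly to $T_G'$ as the paper does.
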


\begin{proof}
Fix  $\cA \in \cP_{m}$. For any $c>0$  we have  $T_G \leq T_G'$, where $T_G'$ is defined in~\eqref{T_G_prime}, and it is the first time that all $m(K-m)$ processes  of the form $\lambda^{k} - \lambda^{j}$ with $k \in \cA \; \text{and} \; j \notin \cA$ exceed  $c$. %simultaneously a common threshold $c$. 
Due to condition~\eqref{second},  each   $\lambda^{k} - \lambda^{j}$  with $k \in \cA \; \text{and} \; j \notin \cA$ is a random walk under $\Pro_\cA$ 
 with positive drift $D_1^{k} + D_0^{j}$ and finite second  moment. Therefore, from Lemma~\ref{aux} 
 it follows that   as $c \to \infty$:
\begin{align*}
\Exp_\cA [T_G']  \leq c \left( \min_{k \in \cA, j \notin \cA} (D_1^{k} + D_0^{j})\right)^{-1}
 + O\left(  m(K-m) \sqrt{c} \right),
\end{align*}
%\frac{c}{ \min_{k \in \cA, j \notin \cA} (D_1^{k} + D_0^{j})}
and this completes the proof, since 
$\min_{k \in \cA, j \notin \cA} (D_1^{k} + D_0^{j}) =
 \eta_1^{\cA} + \eta_0^{\cA}
$.
\end{proof}

%The second order term doesn't affect asymptotic optimality, but will provide insight in Simulation study(Section~\ref{simulation}). 

The next theorem establishes the asymptotic optimality of the gap rule.

\begin{theorem}\label{asymptotic_optimality_TG}
Suppose assumption~\eqref{second} holds and 
let the threshold $c$ in the gap rule be selected according to~\eqref{thres}.   Then for every $\cA \in \cP_m$, we have as $\alpha,\beta \to 0$
\begin{equation*}
\Exp_\cA[T_G] \;\sim \; 
\frac{|\log(\alpha\wedge \beta)|}{\eta_1^{\cA} +
 \eta_0^{\cA}} \;  \sim\; 
\inf_{(T,d) \in \Delta_{\alpha,  \beta}(\cP_m)} \Exp_\cA [T].
\end{equation*}
\end{theorem}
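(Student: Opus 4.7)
The plan is a standard sandwich: match an asymptotic upper bound on $\Exp_\cA[T_G]$ (from the already-proved Lemma~\ref{ESS_TG}) with an asymptotic lower bound on the entire class $\Delta_{\alpha,\beta}(\cP_m)$ (from Theorem~\ref{lower_bound}), choosing the comparison hypothesis cleverly so that the two bounds coincide to first order.

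For the upper bound, I will simply substitute the threshold $c = |\log(\alpha\wedge\beta)| + \log(m(K-m))$ from \eqref{thres} into Lemma~\ref{ESS_TG}. Since $\log(m(K-m))$ is a fixed constant, $c \to \infty$ and $c = |\log(\alpha\wedge\beta)|(1+o(1))$ as $\alpha,\beta\to 0$, while the $O(m(K-m)\sqrt{c})$ correction is negligible compared to the leading linear-in-$c$ term. This yields
$$
\Exp_\cA[T_G] \;\leq\; \frac{|\log(\alpha\wedge\beta)|}{\eta_1^\cA + \eta_0^\cA}\,(1+o(1)).
$$

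For the lower bound, I will invoke Theorem~\ref{lower_bound} with a single, carefully selected $\MC \in \cP_m$ rather than taking a maximum. Since $|\MC|=|\cA|=m$, the condition $\MC\neq \cA$ automatically forces both $\MC\setminus\cA\neq\emptyset$ and $\cA\setminus\MC\neq\emptyset$, so $\gamma_{\cA,\MC}(\alpha,\beta) = \varphi(\alpha,\beta)\vee\varphi(\beta,\alpha)$, which by Remark~\ref{remark_on_phi} equals $|\log(\alpha\wedge\beta)|(1+o(1))$. To make the denominator in \eqref{lower_bound_formula} as small as possible, I will pick $\MC$ that differs from $\cA$ in exactly one element: choose $k^*\in\cA$ with $D_1^{k^*}=\eta_1^\cA$ and $j^*\notin\cA$ with $D_0^{j^*}=\eta_0^\cA$, and set $\MC = (\cA\setminus\{k^*\})\cup\{j^*\}$. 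The assumption $1\leq m\leq K-1$ guarantees that both $k^*$ and $j^*$ exist and that $\MC\in\cP_m$. For this $\MC$, the denominator collapses to exactly $\eta_1^\cA + \eta_0^\cA$, giving
$$
\inf_{(T,d)\in \Delta_{\alpha,\beta}(\cP_m)} \Exp_\cA[T] \;\geq\; \frac{|\log(\alpha\wedge\beta)|}{\eta_1^\cA + \eta_0^\cA}\,(1+o(1)).
$$

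Combining the two bounds proves both asymptotic equivalences in the theorem. There is no real obstacle here: the proof is almost entirely bookkeeping on top of Lemma~\ref{ESS_TG}, Theorem~\ref{lower_bound}, and Remark~\ref{remark_on_phi}. The only point requiring mild care is verifying that the one-element-swap $\MC$ is simultaneously an admissible comparison (same cardinality, distinct from $\cA$) \emph{and} minimises the denominator in the lower bound; this is precisely where $\cP_m$ being closed under swaps, and the explicit form of the minima $\eta_1^\cA,\eta_0^\cA$, play their role.
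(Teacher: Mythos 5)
Your proposal is correct and follows essentially the same route as the paper: plug the threshold \eqref{thres} into Lemma~\ref{ESS_TG} for the upper bound, and apply Theorem~\ref{lower_bound} with a single-element-swap alternative $\MC=(\cA\setminus\{k^*\})\cup\{j^*\}$ (the paper phrases this as minimizing the denominator over all such swaps) together with Remark~\ref{remark_on_phi}. The only cosmetic difference is wording: your $\MC$ ``differs from $\cA$ in exactly one swap,'' i.e.\ in two streams, exactly as in the paper's argument.
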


\begin{proof} 
Fix $\cA \in \cP_m$. If thresholds are selected according to ~\eqref{thres}, then from Lemma~\ref{ESS_TG}  it follows that as $\alpha,\beta \to 0$ 
\begin{equation}\label{upper_bound_TG2}
\Exp_\cA[T_G]  \leq   \frac{|\log(\alpha\wedge \beta)| } { \eta_1^{\cA} + \eta_0^{\cA} } + O\left( m (K-m) \sqrt{|\log (\alpha\wedge \beta) |} \right).
\end{equation}
Therefore, it suffices to show that  the lower bound in Theorem~\ref{lower_bound} agrees  with the upper bound in~\eqref{upper_bound_TG2}   in the first-order term   as $\alpha,\beta\to 0$.  To see this, note that for any $\MC \in \cP_m$ such that $\MC \neq  \cA$  we have   $\MC \setminus \cA
\neq \emptyset$ \textit{and}  $\cA \setminus \MC \neq \emptyset$, and consequently 
 $$\gamma_{\cA,\MC}(\alpha,\beta)=  \varphi(\alpha, \beta)  \vee \varphi(\beta, \alpha).
 $$
 % \max\{ \varphi(\alpha, \beta) , \varphi(\beta, \alpha)\}.$$
  This means that the numerator  in~\eqref{lower_bound_formula} does not depend on $\MC$.  Moreover, if we restrict our attention to subsets in $\cP_m$ that differ from $\cA$ in two streams, i.e., subsets of the form $\MC = \cA \cup \{j\} \setminus \{k\} $ for  some $k \in \cA$ and $j \notin \cA$, for which 
$$\sum_{i \in \cA\setminus \MC} D_1^{i} + \sum_{i \in \MC\setminus \cA} D_0^i  =  D_1^{k} + D_0^{j},
$$
then we have
\begin{align*}  
\min_{\MC \in \cP_m, \MC \neq    \cA  }  \left[
 \sum_{i \in \cA\setminus \MC} D_1^{i} + \sum_{i \in \MC\setminus \cA} D_0^i \right] &\leq \min_{k \in \cA , j\notin \cA}  \, \left[D_1^{k} + D_0^{j} \right]  = \eta_1^{\cA} + \eta_0^{\cA}.
\end{align*}
By the last inequality and  Theorem~\ref{lower_bound}  we obtain the following non-asymptotic lower bound, which holds for  any $\alpha, \beta$ such that $\alpha+\beta<1$:
\begin{align*} 
\inf_{ (T,d) \in \Delta_{\alpha,\beta}(\cP_m)} \Exp_\cA [T ] 
\geq  \frac{\max\{ \varphi(\alpha, \beta) , \varphi(\beta, \alpha)\} } { \eta_1^{\cA} + \eta_0^{\cA} } .
\end{align*}
By~\eqref{phi_lemma}, we have as $\alpha, \beta \rightarrow 0$ 
%such that   $\alpha\log(\beta) + \beta\log(\alpha) \to 0$, we have
%\begin{equation} \label{phi_lemma}
% \varphi(\alpha,\beta)  =-\log (\beta) +o(1), \quad  \varphi(\beta, \alpha)  =-\log (\alpha) +o(1),
% \end{equation}
%which implies $
$$
\max\{ \varphi(\alpha, \beta) , \varphi(\beta, \alpha)\} =  |\log (\alpha \wedge \beta)|\,(1 + o(1)).
$$
%&= \max\{  -\log \alpha, -\log \beta \}+o(1) \\
%When  $\alpha,\beta \to 0$ so that  $\alpha\log \beta + \beta \log \alpha \to 0$,  we have $\varphi(\alpha,\beta) = - \log \beta  +o(1)$ and $ \varphi(\beta,\alpha) =- \log \alpha  + o(1)$, where $o(1)$ is a vanishing term as $\alpha,\beta \to 0$,  therefore 
%\begin{align*}
%\max\{ \varphi(\alpha, \beta) , \varphi(\beta, \alpha)\} =  |\log (\alpha \wedge \beta)|  + o(1),
%\end{align*}  
%which completes the proof.
%\begin{align*} 
%\phi(\alpha, \beta) \vee \phi(\beta, \alpha) 
%&=   (- \log \beta)  \vee  (- \log \alpha)  + o(1) \\
%&= -[  (\log \beta) \wedge 
%(\log \alpha) ] + o(1)  =  - \log (\alpha \wedge \beta)  + o(1),
%\end{align*} 
Consequently,
\begin{align*} 
\inf_{(T,d) \in \Delta_{\alpha,\beta}(\cP_m)} \Exp_\cA \left(T\right) 
\geq   \frac{|\log (\alpha\wedge \beta)|} { \eta_1^{\cA} +
 \eta_0^{\cA}}\,(1 + o(1)),
\end{align*}
which  completes the proof. 
\end{proof}

\begin{remark} \label{r1}
It is interesting to consider the special case of identical hypotheses, in which $f_1^k=f_1$ and $f_0^k=f_0$, and consequently  $D_1^k=D_1$ and $D_0^k=D_0$ for every $k \in [K]$. Then, 
 $\eta_1^{\cA}=D_1$ and  $\eta_0^{\cA}=D_0$  for every $\cA \subset [K]$, and  from Theorem~\ref{asymptotic_optimality_TG} it follows that  the  \textit{first-order} asymptotic approximation  to the expected sample size of the gap rule (as well as to the  optimal expected sample size within 
$\Delta_{\alpha,\beta}(\cP_m)$),  $|\log(\alpha\wedge \beta)| / ( D_1+D_0)$,   is  independent of the  number of signals, $m$. We should  stress that  this does not mean that the \textit{actual}  performance of the gap rule  is independent of $m$. Indeed,  the  second  term  in the right-hand side of~\eqref{upper_bound_TG2} suggests that  the smaller $m(K-m)$ is, i.e., the further away the proportion of signals $m/K$ is from $1/2$,  the smaller the expected sample size of the gap rule will be. This intuition will be corroborated by the simulation study in Section \ref{simulation} (see Fig.~\ref{fig:three}). 
\end{remark}

%\begin{remark} \label{Remark:known_alternatives}
%Due to special structure of $\cP_m$, $\textup{FWER-I}_{\cA} (\delta) = \textup{FWER-II}_{\cA} (\delta)$, since two types of error will be committed at the same time.
%\end{remark}

%\begin{remark} \label{Remark:matrix_limitation}
%In the special case of $m = 1$, the sequential test $\delta_G$ is a special case of  the matrix SPRT~\citep{dragalin2000multihypothesis}, whose asymptotic optimality was established therein with respect to a   different class of sequential tests.  However, for general $m\geq 2$, a brute-force application of the matrix-SPRT becomes infeasible when $K$ is large, as it requires us to compute and compare $|\cP_m| = \binom{K}{m}$ likelihood ratios. Moreover, the threshold implied by this approach   is roughly $|\log(\frac{\alpha\wedge \beta}{|\cP|})|$ (c.f. $c$ in Theorem~\ref{asymptotic_optimality_TG}), which implies a much less accurate approximation of error probability (FWER-I/II).
%\end{remark}%\end{comment}

\subsubsection{Lower and upper bounds on the number of  signals}

We will now show that the gap-intersection rule, defined in \eqref{gap-intersection}, is asymptotically optimal 
%it is known that  there are at least $\ell$ and at most $u$ signals 
with respect to class $\cP_{\ell,u}$ for some  $0\leq \ell < u \leq K$.  As before, we start with establishing an upper bound on the expected sample size of this rule.
\begin{lemma}  \label{ESS_LU}
Suppose that assumption~\eqref{second} holds. Then,  for any $\cA \in \cP_{\ell,u}$ we have  as $a,b,c,d \to \infty$
\begin{equation*}
\Exp_\cA [T_{GI} ] \leq
\begin{cases}
\max\left\{ a/ \eta_0^{\cA}  \; , \; c / (\eta_0^{\cA} + \eta_1^{\cA}) \right\} (1+o(1)) \; &\text{ if }\; |\cA| = \ell\\
\max\left\{ a/ \eta_0^{\cA} \; ,\;  b/ \eta_1^{\cA}\right\} 
+ O(K  \sqrt{a \vee b})
\; &\text{ if }\; \ell < |\cA| < u\\
\max\left\{  b/ \eta_1^{\cA} \; ,\;  d / (\eta_0^{\cA} + \eta_1^{\cA})  \right\}  (1+o(1)) \; &\text{ if }\; |\cA| = u
\end{cases} 
\end{equation*} 
%where $\eta_0^{\cA}, \eta_1^{\cA}$ are defined in~\eqref{eta_def}. 
%\begin{comment}
Furthermore, if $c - a = O(1)$ and $d-b = O(1)$, then
\begin{equation} \label{GIsecondorder}
\Exp_\cA [T_{GI} ] \leq 
\begin{cases}
a/ \eta_0^{\cA}  +  O((K- \ell)  \sqrt{a} )\quad &\text{ if }\; |\cA| = \ell\\
b/ \eta_1^{\cA} +O(u  \sqrt{b} ) \quad &\text{ if }\; |\cA| = u
\end{cases} 
\end{equation} 
%\end{comment}
\end{lemma}

\begin{proof}
Fix  $\cA \in \cP_{\ell,u}$.  By the definition of the stopping time $T_{GI}$, $$\Exp_\cA [T_{GI}] \leq \min \left\{\Exp_\cA[\tau_1], \Exp_\cA[\tau_2],\Exp_\cA[\tau_3] \right\}.$$ 

Suppose first   $\ell < |\cA| <u$ and observe that $\tau_2 \leq \tau_2'$, where 
$\tau_2'$ is defined in~\eqref{tau2_prime}. Under condition~\eqref{second},
%$$
%\tau_2' = \inf\{n \geq 1: -\lambda^{j}(n) \geq a,\; \lambda^{k}(n) \geq b \; \; \text{for every} \;  k \in \cA, j \notin \cA \}.
%$$
for every $k \in \cA$ and $j \notin \cA$, 
  $-\lambda^{j}$  and $\lambda^{k}$ are random walks with finite second moments and positive drifts $D_0^{j}$ and $D_1^{k}$, respectively. Therefore, from Lemma \ref{aux} we have that 
$$
\Exp_{\cA}[\tau'_2]  \leq \max\left\{ a/ \eta_0^{\cA} \; ,\;  b/ \eta_1^{\cA}\right\}  +   O ( K \sqrt{a \vee b}  ) .
$$

Suppose now that $|\cA| = \ell$ and observe that $\tau_1 \leq \tau_1'$, where 
$$
\tau_1' := \inf\{n \geq 1: -\lambda^{j}(n) \geq a,\; \lambda^{k}(n) - \lambda^{j}(n) \geq c \; \; \text{for every} \;  k \in \cA, j \notin \cA \},
$$
where $-\lambda^{j}$ and $\lambda^{k}- \lambda^{j}$ are random walks with
finite second moments and  positive drifts $D_0^{j}$ and $D_1^{k} + D_0^{j}$, respectively. The result  follows again  from an application of  Lemma \ref{aux}. % Appendix~\ref{multiple_random_walk}. 
If in addition we have that $c - a = O(1)$, then  $\tau_1 \leq \tau_1''$, where
$$
\tau_1'' := \inf\{n \geq 1: -\lambda^{j}(n) \geq a,\; \lambda^{k}(n) \geq c - a  \; \; \text{for every} \;  k \in \cA, j \notin \cA \}.
$$
Therefore, the second part of the lemma  follows again  from an application of Lemma \ref{aux}. %Appendix~\ref{multiple_random_walk}.
\end{proof}

%\begin{comment}
%By Appendix~\ref{multiple_random_walk} and due to the fact $c - a = O(1)$, 
%$$
%\Exp(\tau_1'')  \leq \max_{j \in A^c} \left( a/D_0^j  \right) +
% O \left( \sqrt{ \sum_{j \in A^c} a/D_0^j }  \right)   = a/\eta_0^{\cA} + O\left((K-l) \cdot \sqrt{a} \right)
%$$
%The other two cases can be proved similarly.
%\end{comment}

%\begin{remark}  This Lemma shows, in particular, that for any $\cA \in \cP_{l,u}$, $T_G$ is proper, {\sl i.e.}, $\Pro_\cA\left( T_{GI} < \infty \right) = 1$. 
%\end{remark}

The next theorem establishes the asymptotic optimality of the gap-inter\-section rule. %  $\delta_{GI}$. % with respect to class  $\cP_{\ell,u}$.

\begin{theorem}\label{asymptotic_optimality_MI}
Suppose that assumption~\eqref{second} holds  and let the thresholds in the gap-inter\-section rule be selected according to~\eqref{thres_LU}. Then
for any $\cA \in \cP_{\ell,u}$, we have as $\alpha,\beta \to 0$  
%\begin{align*}
%\Exp_\cA (T_{GI} ) \;&\sim \;
%\inf_{(T,d) \in \Delta_{\alpha,\beta} (\cP_{\ell,u}) } \Exp_\cA ( T ) 
%\sim\;  
%\begin{cases}
%\max\left\{\frac{|\log \beta|}{\eta_0^{\cA}},\; \frac{|\log \alpha|}{\eta_0^{\cA} + \eta_1^{\cA}} \right\} \quad &\text{ if }\; |\cA| = \ell\\
%\max\left\{  \frac{|\log \beta|}{\eta_0^{\cA}},\; \frac{|\log \alpha|}{\eta_1^{\cA}} \right\} \quad &\text{ if }\; \ell <|\cA| < u \\
%\max\left\{ \frac{|\log \alpha|}{\eta_1^{\cA}} ,\; \frac{|\log \beta|}{\eta_0^{\cA} + \eta_1^{\cA}} \right\} \quad &\text{ if }\; |\cA| = u
%\end{cases} 
%\end{align*}
\begin{align*}
\Exp_\cA [T_{GI} ] \;&\sim \;
\inf_{(T,d) \in \Delta_{\alpha,\beta} (\cP_{\ell,u}) } \Exp_\cA [ T ] \\
&\sim\;  
\begin{cases}
\max\left\{|\log \beta| /   \eta_0^{\cA} \; , \; |\log \alpha|/ (\eta_0^{\cA} + \eta_1^{\cA}) \right\} \quad &\text{ if }\; |\cA| = \ell\\
\max\left\{ |\log \beta| /   \eta_0^{\cA} \; ,\; |\log \alpha| / \eta_1^{\cA} \right\} \quad &\text{ if }\; \ell <|\cA| < u \\
\max\left\{ |\log \alpha| / \eta_1^{\cA} \; , \; |\log \beta| / (\eta_0^{\cA} + \eta_1^{\cA}) \right\} \quad &\text{ if }\; |\cA| = u
\end{cases} .
\end{align*}
% as $\alpha,\beta \to 0$ so that  $\alpha\log(\beta) + \beta\log(\alpha) \to 0$. % where $\eta_0^{\cA}, \eta_1^{\cA}$ are defined in ~\eqref{eta_def}.
\end{theorem}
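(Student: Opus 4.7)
The plan is to sandwich $\Exp_\cA[T_{GI}]$ between an upper bound that comes directly from Lemma~\ref{ESS_LU} applied with the thresholds in~\eqref{thres_LU}, and a lower bound obtained from Theorem~\ref{lower_bound} by judiciously choosing competing configurations $\MC \in \cP_{\ell,u}$. A case split on whether $|\cA|=\ell$, $\ell<|\cA|<u$, or $|\cA|=u$ is natural, since the proposed rule uses a different stopping rule $(\tau_1, \tau_2$, or $\tau_3)$ to dominate in each regime, and since the set of ``neighboring'' $\MC\in\cP_{\ell,u}$ available for the lower bound depends on $|\cA|$.

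For the upper bound I would substitute the thresholds from~\eqref{thres_LU}, noting that $a=|\log\beta|+O(1)$, $b=|\log\alpha|+O(1)$, $c=|\log\alpha|+O(1)$, $d=|\log\beta|+O(1)$, all of which tend to infinity as $\alpha,\beta\to 0$. Plugging these into the three cases of Lemma~\ref{ESS_LU} yields, modulo $(1+o(1))$ factors, exactly the three expressions claimed by the theorem. (Only the first, general part of Lemma~\ref{ESS_LU} is needed: the second part's hypothesis $c-a=O(1)$ generally fails since the two differ by $|\log\alpha|-|\log\beta|$ which need not be bounded when $\alpha,\beta\to 0$ at arbitrary rates.)

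For the lower bound I would apply Theorem~\ref{lower_bound} to carefully chosen $\MC\in\cP_{\ell,u}$, using Remark~\ref{remark_on_phi} to replace the $\gamma$'s by their asymptotic equivalents. Let $k^\ast\in\arg\min_{k\in\cA}D_1^k$ and $j^\ast\in\arg\min_{j\notin\cA}D_0^j$. In the middle case $\ell<|\cA|<u$, the subsets $\MC_1=\cA\cup\{j^\ast\}$ and $\MC_2=\cA\setminus\{k^\ast\}$ both lie in $\cP_{\ell,u}$ and yield the lower bounds $|\log\beta|/\eta_0^{\cA}$ and $|\log\alpha|/\eta_1^{\cA}$ respectively (since $\gamma_{\cA,\MC_1}\sim|\log\beta|$ and $\gamma_{\cA,\MC_2}\sim|\log\alpha|$, the denominators collapsing to a single $D$-value). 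In the boundary case $|\cA|=\ell$, I use $\MC_1=\cA\cup\{j^\ast\}\in\cP_{\ell,u}$ to recover $|\log\beta|/\eta_0^{\cA}$ and $\MC_2=\cA\cup\{j^\ast\}\setminus\{k^\ast\}\in\cP_{\ell,u}$ (which preserves cardinality $\ell$) to recover a bound of the form $|\log(\alpha\wedge\beta)|/(\eta_0^{\cA}+\eta_1^{\cA})$; the case $|\cA|=u$ is symmetric, using $\MC_2=\cA\setminus\{k^\ast\}$ and $\MC_1=\cA\cup\{j^\ast\}\setminus\{k^\ast\}$.

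The only delicate point, which I expect to be the main (minor) obstacle, is in the boundary cases: my $\MC_2$ differs from $\cA$ in \emph{both} directions, so Theorem~\ref{lower_bound} delivers $\gamma_{\cA,\MC_2}=\varphi(\alpha,\beta)\vee\varphi(\beta,\alpha)\sim|\log(\alpha\wedge\beta)|$ rather than the $|\log\alpha|$ appearing in the statement when $|\cA|=\ell$ (and $|\log\beta|$ when $|\cA|=u$). Matching lower and upper bounds therefore requires a short case analysis on the ordering of $\alpha,\beta$: if $\alpha\le\beta$ the two are literally equal, while if $\alpha>\beta$ the ``$|\log\beta|/\eta_0^{\cA}$'' term from $\MC_1$ dominates the maximum in both the lower and the upper bound, so the discrepancy is harmless. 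Combining the three case analyses completes the proof.
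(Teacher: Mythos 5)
Your proposal is correct and follows essentially the same route as the paper: the upper bound comes from Lemma~\ref{ESS_LU} with the thresholds in~\eqref{thres_LU}, and the lower bound from Theorem~\ref{lower_bound} applied to one-stream-added, one-stream-removed, and one-stream-swapped alternatives $\MC \in \cP_{\ell,u}$, combined with Remark~\ref{remark_on_phi}; the paper writes out only the case $|\cA|=\ell$ and uses exactly your choices $\cA\cup\{j\}$ and $\cA\cup\{j\}\setminus\{k\}$ there. Your ``delicate point'' is resolved in the paper by the observation that $\varphi(\alpha,\beta)/(\eta_0^{\cA}+\eta_1^{\cA}) \leq \varphi(\alpha,\beta)/\eta_0^{\cA}$, which collapses the maximum to the stated form, equivalent to your case analysis on the ordering of $\alpha$ and $\beta$.
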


\begin{proof} 
Fix $\cA \in \cP_{\ell,u}$.  We will prove the result only in the case that  $|\cA| = \ell$, as the other two cases can be proved similarly.  If thresholds are selected according to \eqref{thres_LU}, then from  Lemma~\ref{ESS_LU} it follows that 
$$
\Exp_\cA [T_{GI} ] \leq  \max\left\{ \frac{|\log \beta|}{\eta_0^{\cA}} \; , \; 
\frac{|\log \alpha|}{ \eta_0^{\cA} + \eta_1^{\cA} } \right\}  \, (1+o(1))  .
$$

 Thus, it suffices to show that this asymptotic upper bound agrees asymptotically, \textit{up to a first order},  with the lower bound in Theorem~\ref{lower_bound}. Indeed, if $\MC$ is a subset in $\cP_{\ell,u}$ that has one more stream than $\cA$, i.e., $\MC = \cA \cup \{j\}$ for some  $j \notin \cA$, then
$$
\frac{\gamma_{\cA,\MC}(\alpha,\beta)}
{\sum_{i \in \cA\setminus \MC} D_1^{i} +
\sum_{i \in \MC\setminus \cA} D_0^i} =  \frac{  \varphi(\alpha, \beta) }{ D_0^{j}}.
$$
%Now, if $\MC$ is a subset in $\cP_{\ell,u}$ that has the same size as $\cA$,  i.e., 
Further, consider $\MC = \cA \cup \{j\}/\{k\} \in \cP_{\ell,u}$ for some $k \in \cA$ and $j \notin \cA$, then 
$$
\frac{\gamma_{\cA,\MC}(\alpha,\beta)}
{\sum_{i \in \cA\setminus \MC} D_1^{i} +
\sum_{i \in \MC\setminus \cA} D_0^i} 
=  \frac{  \max\{ \varphi(\alpha, \beta) , \varphi(\beta, \alpha)\}}{ D_1^{k} + D_0^{j}}.
$$
Therefore,  from~\eqref{lower_bound}  it follows that
for every $\alpha, \beta$ such that $\alpha+\beta <1$
\begin{align*} 
\inf_{(T,d) \in \Delta_{\alpha,\beta}(\cP_{\ell,u})} \Exp_\cA [T ]
&\geq \max_{k \in \cA, j \notin \cA} \max\left\{  \frac{  \varphi(\alpha, \beta) }{ D_0^{j}} \; , \;  \frac{  \max\{ \varphi(\alpha, \beta) , \varphi(\beta, \alpha)\}}{ D_1^{k} + D_0^{j}}   \right\} \\
&= \max\left\{\frac{ \varphi(\alpha, \beta)} { \eta_0^{\cA}} ,\; \frac{ \varphi(\beta, \alpha)}{ \eta_1^\cA + \eta_0^{\cA} } \right\}.
\end{align*}
% where for the equality  we have used the fact that  $\varphi(\alpha, \beta)  \cdot(\eta_0^{\cA})^{-1} > \varphi(\alpha, \beta)  \cdot(\eta_0^{\cA} +\eta_1^{\cA})^{-1}$.  
From~\eqref{phi_lemma}  it follows  that as  $\alpha,\beta\to 0$ 
% we have $\varphi(\alpha,\beta) = | \log \beta|  +o(1)$ and $ \varphi(\beta,\alpha) =|\log \alpha|  + o(1)$, where $o(1)$ is a vanishing term as $\alpha,\beta \to 0$,
$$
\inf_{(T,d) \in \Delta_{\alpha,\beta}(\cP_{l,u})} \Exp_\cA [T ]
\geq 
\max\left\{\frac{ |\log \beta|} { \eta_0^{\cA}} ,\; \frac{|\log \alpha|}
{ \eta_1^\cA + \eta_0^{\cA} } \right\}\, (1 +o(1)),
$$
%as  $\alpha,\beta\to 0$ such that  $\alpha\log(\beta) + \beta\log(\alpha) \to 0$, 
which completes the proof. 
\end{proof}

%\subsection{The case without prior information}
%Let us now focus on the case of  no prior information,  i.e.,  $\cP=\cP_{0,K}=2^{[K]}$, which is of particular interest since it is  the default setup in the literature of multiple testing (fixed sample size and sequential). 

\subsection{The case of no prior information} \label{noprior}
Recall that when we set $\ell=0$ and $u=K$, the \luName rule reduces to the intersection rule, defined in \eqref{intersection}. Therefore,  setting  $\ell=0$ and $u=K$ in Theorem \ref{asymptotic_optimality_MI}  we immediately obtain  that the intersection rule  is asymptotically optimal  in the  case of no prior information, i.e., with respect to class $\cP_{0,K}$;
this is itself a  new result to the best of our knowledge. However,  a more surprising corollary  of Theorem \ref{asymptotic_optimality_MI} is that 
 %also attains, up to  a first order asymptotic approximation,  the optimal  performance with respect to any class   $\cP_{\ell, u}$ \textit{whenever the true number of signals is between $\ell$ and $u$}. In fact, $\delta_I$ also
the intersection rule, which does not use any prior information, is  asymptotically optimal even if bounds on the  number of signals are available, when  the following conditions are satisfied:
\begin{enumerate}
\item[(i)] the error probabilities are of the same order of magnitude,  in the sense that  $|\log \alpha| \sim  |\log \beta|$, 
\item[(ii)]  the hypotheses are identical and symmetric, in the sense that $D_1^k= D_0^k= D$ for every $k \in [K]$. 
%\item[(iii)]  the hypotheses are symmetric, in the sense that  $D_1=D_0=D$. 
\end{enumerate}

 On the other hand, a comparison with Theorem
 \ref{asymptotic_optimality_TG} reveals that, even in this special case,  
 the intersection rule  is never asymptotically optimal when the exact umber of signals is known in advance, in which case it  requires roughly \textit{twice} as many observations on average as the gap rule for the same precision level. 
  %, an insight that w corroborated by the simulation study in Section \ref{simulation} (see Fig.~\ref{fig:three}).  
 The following corollary summarizes these observations.

\begin{corollary}\label{intersection_optimiality}
Suppose that assumption \eqref{second} holds and that  the thresholds in the intersection rule are  selected according to \eqref{thres-intersection}. Then,  %$(T_I, d_{I}) \in \Delta_{\alpha,\beta}(\cP_{0,K})$,  and  
for any $\cA \subset [K]$ we have  as $\alpha, \beta \rightarrow 0$ 
\begin{equation} \label{I_secondorder}
\Exp_\cA [T_{I} ] \leq  \max \left\{ 
\frac{|\log \alpha|}{ \eta_1^{\cA}}
, \frac{|\log \beta|}{ \eta_0^{\cA}} \right\}
+ O(K   \sqrt{|\log (\alpha \wedge \beta)|} ).
\end{equation} 
Further, the intersection rule is asymptotically optimal in the class $\Delta_{\alpha,\beta}(\cP_{0,K})$, i.e.,
as $\alpha,\beta \to 0$ 
%If also $\alpha\log(\beta) + \beta\log(\alpha) \to 0$, then
\begin{align*}
\Exp_\cA [T_{I} ]  \; \sim\;    \max \left\{ 
\frac{|\log \alpha|}{ \eta_1^{\cA}}
, \frac{|\log \beta|}{ \eta_0^{\cA}} \right\}  \; \sim \;
\inf_{(T,d)  \in \Delta_{\alpha,\beta}(\cP_{0,K})} \Exp_\cA [T].
\end{align*}
In the special case that   $|\log \alpha| \sim  |\log \beta|$  and  $D_1^{k}=D_0^{k}=D$ for every $k \in [K]$, 
 \begin{align*}
\Exp_\cA [T_{I} ]
&\sim\;   \frac{|\log \alpha|}{ D } \; \sim \;
\inf_{(T,d)  \in \Delta_{\alpha,\beta}(\cP_{\ell,u})} \Exp_\cA [ T ] \quad  \text{for every} \; \cA \in \cP_{\ell, u}, \\
\Exp_\cA [T_{I} ]
& \sim\;   \frac{|\log \alpha|}{ D } \; \sim \;  
2 \,  \inf_{(T,d)  \in \Delta_{\alpha,\beta}(\cP_{m})} \Exp_\cA [T]  \quad \text{for every} \; \cA \in \cP_{m},
\end{align*}
for every $0 \leq \ell <u\leq  K$ and $1 \leq  m \leq K-1$.

\end{corollary}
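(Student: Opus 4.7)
My plan hinges on the observation that the intersection rule $(T_I,d_I)$ is precisely the gap-intersection rule $(T_{GI},d_{GI})$ specialized to $\ell=0$ and $u=K$, so almost everything reduces to invoking Theorems~\ref{lower_bound} and~\ref{asymptotic_optimality_MI} and then doing a small amount of arithmetic under the symmetry hypothesis.

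For the second-order upper bound \eqref{I_secondorder}, I would note that when $\ell=0$ and $u=K$ we have $\tau_1=\tau_3=\infty$ and hence $T_I=\tau_2\leq \tau_2'$, where $\tau_2'$ is the stopping time defined in~\eqref{tau2_prime}. Under assumption~\eqref{second}, for every $k\in\cA$ and $j\notin\cA$, the processes $\lambda^k$ and $-\lambda^j$ are random walks with finite second moments and positive drifts $D_1^k$ and $D_0^j$, respectively. Lemma~\ref{aux} in the Appendix then gives $\Exp_\cA[\tau_2']\leq \max\{a/\eta_0^\cA,\,b/\eta_1^\cA\}+O(K\sqrt{a\vee b})$, and substituting the thresholds in~\eqref{thres-intersection} yields~\eqref{I_secondorder}. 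The edge cases $\cA=\emptyset$ and $\cA=[K]$ are absorbed by the conventions $\eta_1^\emptyset=\eta_0^{[K]}=\infty$, which make the corresponding term zero (and cause the defining constraint for that term in $\tau_2'$ to be vacuous).

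For asymptotic optimality inside $\Delta_{\alpha,\beta}(\cP_{0,K})$, I would apply Theorem~\ref{lower_bound} to two families of subsets that differ from $\cA$ in exactly one stream. Taking $\MC=\cA\cup\{j\}$ for $j\notin\cA$ gives the lower bound $\varphi(\alpha,\beta)/D_0^j$, and taking $\MC=\cA\setminus\{k\}$ for $k\in\cA$ gives $\varphi(\beta,\alpha)/D_1^k$. Maximizing over $j\notin\cA$ and $k\in\cA$ and invoking Remark~\ref{remark_on_phi} produces the matching first-order lower bound $\max\{|\log\beta|/\eta_0^\cA,\,|\log\alpha|/\eta_1^\cA\}(1+o(1))$, completing this part of the corollary.

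In the symmetric setting $D_1^k=D_0^k=D$ with $|\log\alpha|\sim|\log\beta|$, the upper bound \eqref{I_secondorder} collapses to $|\log\alpha|/D\,(1+o(1))$. Examining the three sub-cases of Theorem~\ref{asymptotic_optimality_MI} under these assumptions, each first-order asymptote equals $|\log\alpha|/D$ (the contribution $|\log\alpha|/(2D)$ from the $|\cA|\in\{\ell,u\}$ boundary cases is dominated by $|\log\beta|/D\sim|\log\alpha|/D$), so the intersection rule is asymptotically optimal inside $\Delta_{\alpha,\beta}(\cP_{\ell,u})$ for every $\cA\in\cP_{\ell,u}$. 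On the other hand, Theorem~\ref{asymptotic_optimality_TG} identifies the optimal performance inside $\Delta_{\alpha,\beta}(\cP_m)$ as $|\log(\alpha\wedge\beta)|/(2D)\sim|\log\alpha|/(2D)$, precisely half of what the intersection rule attains, which yields the factor of $2$. The only real obstacle is bookkeeping: one must verify that the $\eta_1^\emptyset=\eta_0^{[K]}=\infty$ conventions consistently reduce the boundary sub-cases to the generic formula, and confirm that under the symmetry and $|\log\alpha|\sim|\log\beta|$ assumptions the three sub-case optima of Theorem~\ref{asymptotic_optimality_MI} indeed merge into the single first-order expression $|\log\alpha|/D$.
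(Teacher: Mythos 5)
Your proposal is correct and follows essentially the paper's own route: the paper obtains the corollary by specializing the gap-intersection analysis to $\ell=0$, $u=K$ (i.e., the bound via $\tau_2\leq\tau_2'$ and Lemma~\ref{aux}, and the lower bound machinery behind Theorem~\ref{asymptotic_optimality_MI}), then doing the same symmetric-case arithmetic and the comparison with Theorem~\ref{asymptotic_optimality_TG} for the factor of $2$. Your explicit re-derivation of the matching lower bound from Theorem~\ref{lower_bound} with $\MC=\cA\cup\{j\}$ and $\MC=\cA\setminus\{k\}$, and your treatment of the edge cases $\cA=\emptyset,[K]$ via the conventions $\eta_1^{\emptyset}=\eta_0^{[K]}=\infty$, just spell out details the paper leaves implicit.
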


\begin{remark}
Corollary \ref{intersection_optimiality}  implies that, in the special symmetric  case that $|\log \alpha| \sim  |\log \beta|$  and  $D_1^{k}=D_0^{k}=D$,  prior lower and upper bounds on the true number of signals do not improve the optimal expected sample size  up to a  \textit{first-order} asymptotic approximation. However,  a comparison between the second-order terms in~\eqref{GIsecondorder} and~\eqref{I_secondorder} suggests that  such prior  information does improve the optimal performance, an intuition that will be corroborated by the simulation study in Section \ref{simulation} (see Fig.~\ref{fig:three}). 
\end{remark}

%\begin{remark}
%Corollary~\ref{intersection_optimiality} implies that knowledge of  the exact number of signals, $m$, roughly halves the optimal  expected sample size in the case of no prior information, no matter what the value of $m$ is. However,
%as we discussed in Remark \ref{r1}, the  effect of this kind of  prior information will be more intense when $m(K-m)$ is small.
%\end{remark} 

\begin{remark}
In addition to the intersection rule,  \citet{de2012sequential} proposed the ``incomplete rule'', $(T_{\max},d_{\max})$, which is defined as 
$$T_{\max} := \max\{\sigma_1,\ldots, \sigma_K\}\; \text{ and } \; 
d_{\max}:= (d_{\max}^1, \ldots ,d_{\max}^K),$$ 
where for every  $k \in [K]$ we have
\begin{align} \label{incomplete_rule}
\sigma_k &:= \inf \left\{n \geq 1: \lambda^k(n) \not \in (-a,b) \right\}
, \quad 
d_{\max}^k := 
\begin{cases}
1,\quad \text{ if } \lambda^k(\sigma_k) \geq b \\
0, \quad\text{ if } \lambda^k(\sigma_k) \leq -a
\end{cases}.
\end{align}
According to this rule,  each stream is sampled until the corresponding test statistic exits the interval $(-a,b)$, \textit{independently of the other streams}. It is clear that, for the same thresholds $a$ and $b$,  $T_{\max} \leq T_{I}$. Moreover, with  a direct application of Boole's inequality, as  in~\citet{de2012sequential}, it follows that selecting the  thresholds according to  \eqref{thres-intersection}   guarantees  the desired error control for the incomplete rule. Therefore,   Corollary~\ref{intersection_optimiality} remains valid if we replace the intersection rule with the incomplete rule.
\end{remark}

\section{Simulation study} \label{simulation}

\subsection{Description}

In this section we present a simulation study whose  goal is to corroborate the asymptotic results and insights of   Section~\ref{theory} in the symmetric case described in Corollary~\ref{intersection_optimiality}. Thus, we set $K = 10$ and  let $ f_i^k =\mathcal{N}(\theta_i,1)$ for each $k \in [K]$, $i = 0,1$, where $\theta_0 =0, \theta_1=0.5$, in which case $D_0^k = D_1^k =D= (1/2) (\theta_1)^2= 1/8$, and the distribution of $\lambda^k$ under $H_1^k$ is the same as $-\lambda^k$ under $H_0^k$. Furthermore, we set $\alpha = \beta$. This is a convenient setup for simulation purposes, since the expected sample size and the two familywise errors of each proposed procedure are the same for all scenarios with the same number of signals, i.e. for all $\cA$'s with the same size.

 %Moreover, the two familywise errors are the same for all scenarios with the same number of signals (correct alternatives), which allows us  to compute and present only one of them. \ys{not quite right. Since $\MFI_{\cA}(\delta) = \MFII_{\cA^c}(\delta)$, it allows...}

%We consider three  different situations depending on the available  prior information regarding the number of signals.  In each of them,  the importance sampling technique of Section \ref{is} was used in order to compute  the maximal familywise type I error probability.
%, $\ma\Pro_{\cA}(\cA \lesssim d)$, to its target level $\alpha$. \ys{maybe just ``in order to compute ...", since we equate max $P_A$ = $\alpha$, not individual.}  
%As we see in Fig.~\ref{fig:std_over_est}, the relative errors of  the proposed  Monte Carlo estimators, even  for error probabilities of the order $ 10^{-8}$, are smaller than $1.5\%$ for the  gap rule, $8\%$ for the gap-intersection rule, $1\%$ for the intersection rule.  

For any user specified level $\alpha$, we have two ways to determine the critical value of each procedure. First, we can use upper bound on the error probability to compute conservative threshold (\eqref{thres} for the gap rule, and~\eqref{thres_LU} for the gap-intersection rule). Second, we can apply  the importance sampling technique of Section~\ref{is} to determine non-conservative threshold, such that the \emph{maximal} familywise type I error probability is controlled \emph{exactly} at level $\alpha$. As we see in Fig.~\ref{fig:std_over_est}, the relative errors of  the proposed  Monte Carlo estimators, even  for error probabilities of the order $ 10^{-8}$, are smaller than $1.5\%$ for the  gap rule, $8\%$ for the gap-intersection rule, $1\%$ for the intersection rule.

%that is the ratio of  the standard deviation of the estimator over the value of  the estimator itself. We observe that the largest relative error  is roughly  $15\%$ when the  error probability is of the order $ 10^{-8}$.
%, this ratio is still less than $15\%$.

\begin{figure}%[htbp]
\subfloat[Gap rule]{
\includegraphics[width=0.31\linewidth]{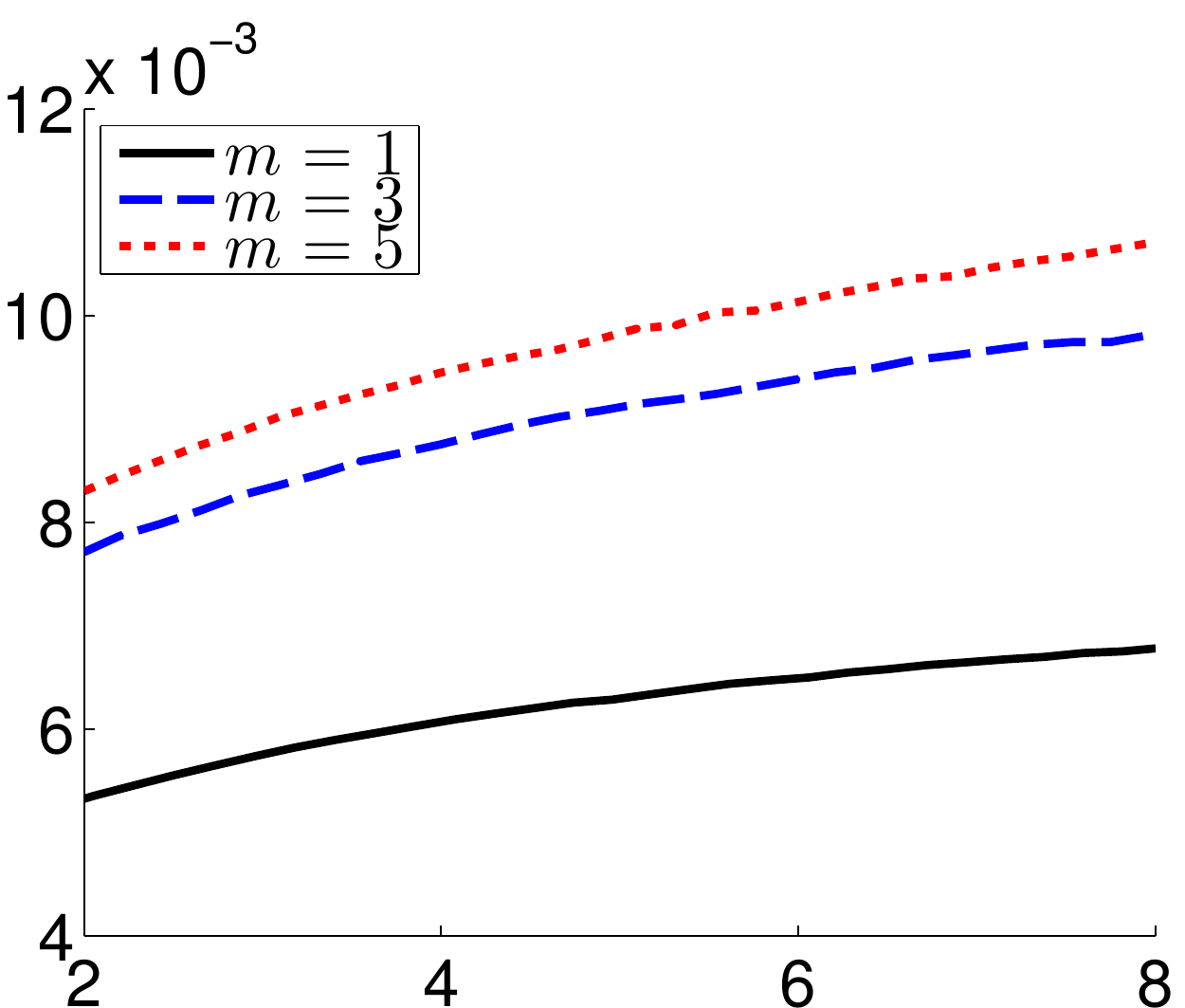}  
}
\subfloat[Gap-intersection rule]{
\includegraphics[width=0.31\linewidth]{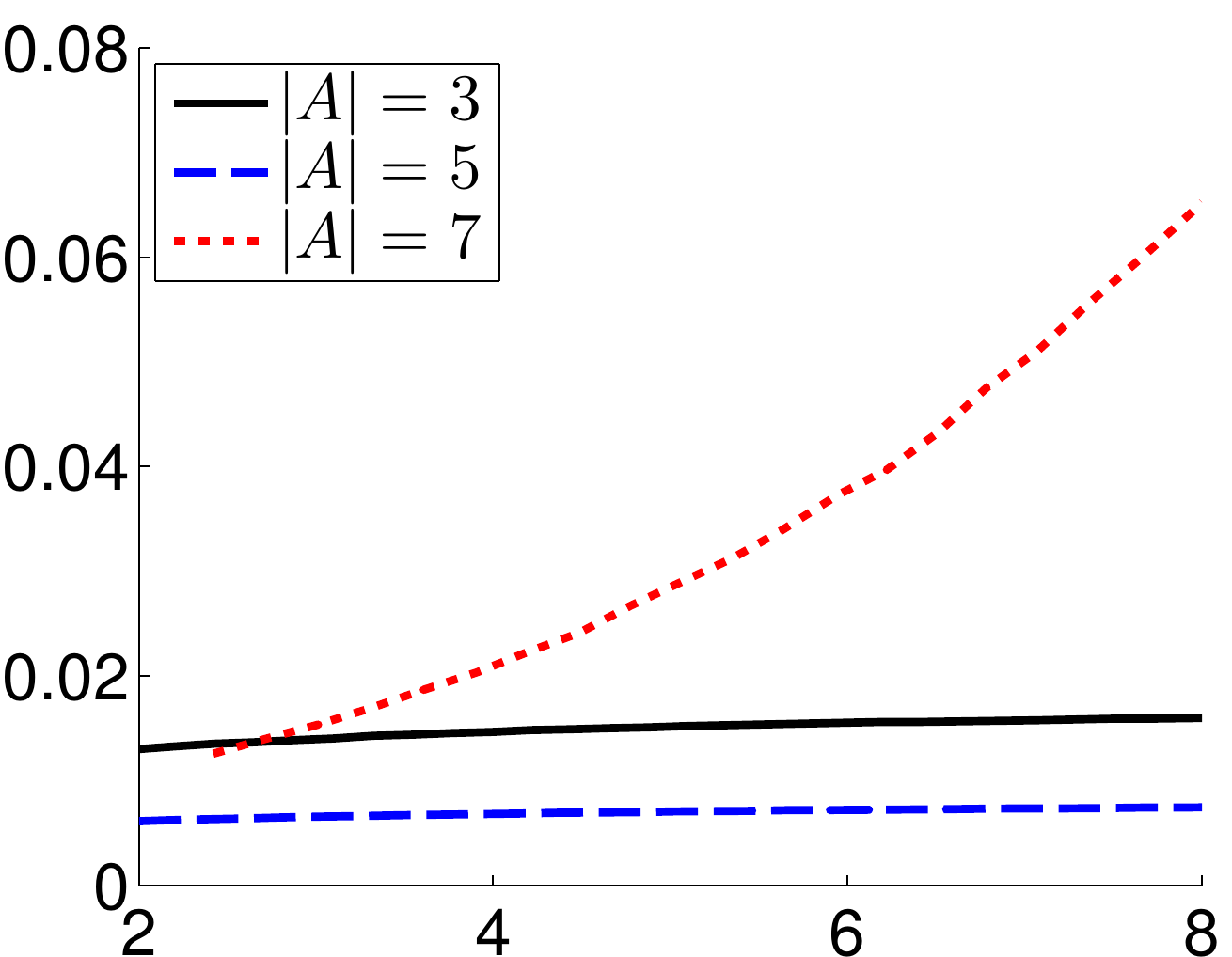}  
}
\subfloat[Intersection rule]{
\includegraphics[width=0.31\linewidth]{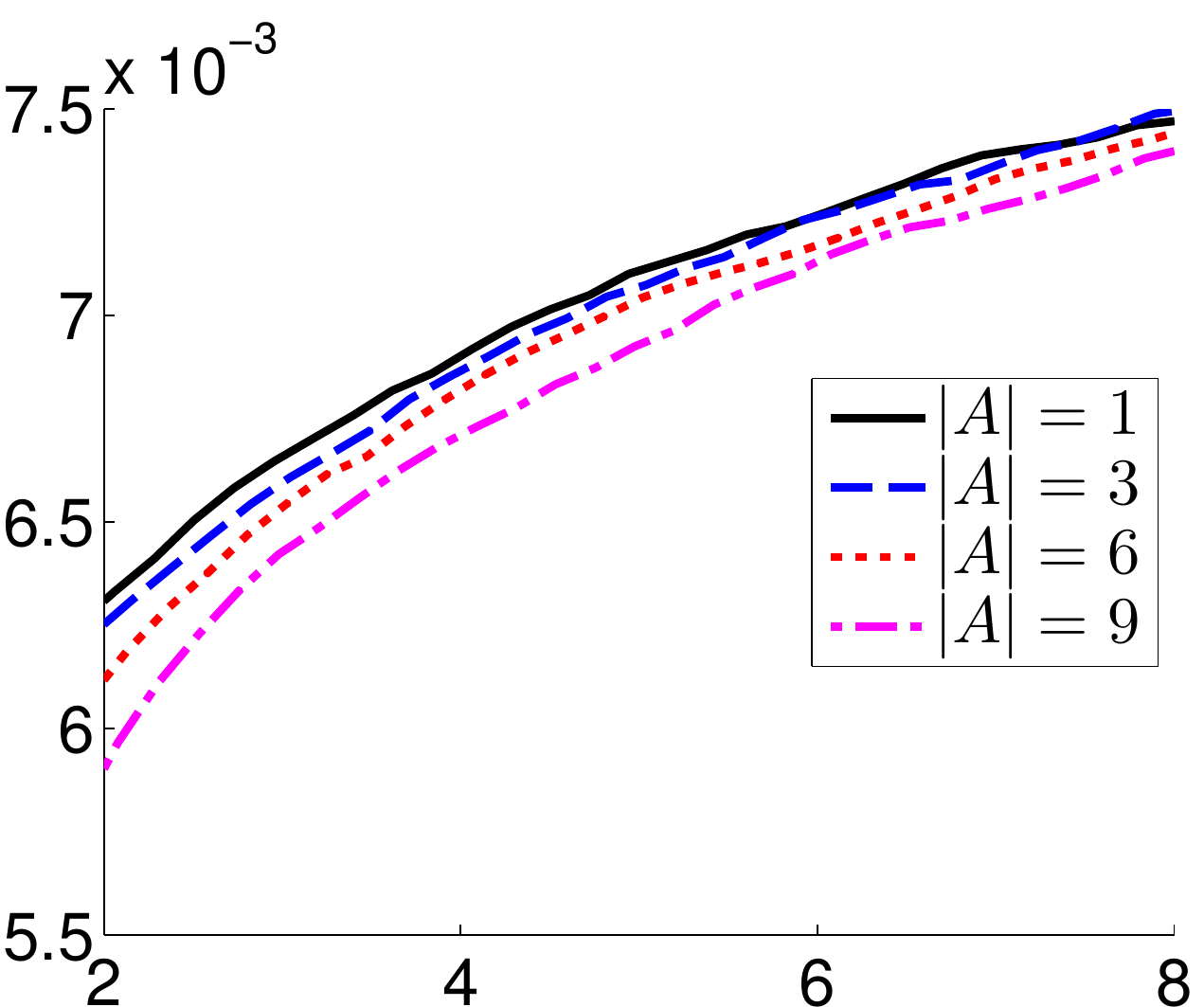}  
}
\caption{The x-axis is  $|\log_{10}(\Pro_{\cA}(\cA \lesssim d))|$. The y-axis is the relative error of the estimate of the familywise type-I error, $\Pro_{\cA}(\cA \lesssim d)$, that is the  ratio of the standard deviation of the estimate  over the estimate itself. Each curve is computed based on $100,000$ realizations.
}
\label{fig:std_over_est}
\end{figure}

\subsubsection{Gap rule}
First, we consider the case  in which the number of signals is known  to be equal to $m$ $(\cP = \cP_m)$ for  $m \in \{1, \ldots, 9\}$, and  we can apply  the corresponding  gap rule, defined in \eqref{gap}. Due to the symmetry of our setup, the expected sample size $\Exp_{\cA}[T_{G}]$ and the error probability $\Pro_\cA(d_{G} \neq \cA)$ are the same for $\cA \in \cP_m$ and $\cA \in \cP_{K-m}$; thus, it suffices to consider $m$ in $\{1,\ldots,5\}$
, and an \textit{arbitrary} $\cA \in \cP_m$ for fixed $m$.

We start with non-conservative critical value determined by Monte Carlo method.  For each  $m \in \{1,3,5\}$  and some   $\cA \in \cP_m$,   we consider $\alpha$'s ranging from $10^{-2}$ to $10^{-8}$. For each such $\alpha$, we compute the  threshold $c$ in the gap-rule  that  guarantees  $\alpha=\Pro_\cA(d_{G} \neq \cA)$, and then the expected sample size $\Exp_{\cA}[T_{G}]$ that corresponds to this threshold.  In  Fig.~\ref{fig:m} we plot   $\Exp_{\cA}[T_{G}]$ against $|\log_{10} (\alpha)|$ when $m=1,3, 5$.  In Table~\ref{tab:gap} we present the actual numerical results for $c = 10$.  

In  Fig.~\ref{fig:m} we also plot  the first-order asymptotic approximation to the optimal expected sample size  obtained in Theorem~\ref{asymptotic_optimality_TG}, which in this particular symmetric case takes the form  $|\log \alpha|/(2D)=4|\log \alpha|$. From our asymptotic theory we know that  the ratio of  $\Exp_{\cA}[T_G]$ over this quantity  goes to 1 as $\alpha \rightarrow 0$, and this convergence is  illustrated in  Fig.~\ref{fig:m_n}. 
 
Further, in Fig.~\ref{fig:gap_upper} we present 
for the case $\cP = \cP_3$ the expected sample size of the gap rule when its threshold is given by the  explicit expression in~\eqref{thres}, and compare it with the corresponding  expected sample size that is obtained with the sharp threshold, which is computed via simulation. 

%sharp and when it is computed vselected to satisfy the error probability constraint with equality.compare it  required expected sample size with that of  above non-conservative thresholdfor the case $\cP = \cP_3$.we consider the conservative threshold determined by~\eqref{thres}, and compare its required expected sample size with that of  above non-conservative threshold, which is plotted in Fig.~\ref{fig:gap_upper} for the case $\cP = \cP_3$.

%In Figure~\ref{fig:asym} we normalize $\Exp_{\cA}(T_{G})$ by $4|\log(\alpha)|$, which is  the lower bound given by Theorem~\ref{lower_bound} and~\ref{asymptotic_optimality_TG}, and we plot this normal version of the expected sample size against $\text{FWER-I}(\delta_{G})$.

%\begin{remark} From theorem~\ref{asymptotic_optimality_TG}, $\inf_{\delta \in \Delta_{\alpha,\alpha}(\cP_m)} \Exp_\cA \left(T\right)$ are asymptotically equivalent to $4|\log(\alpha)|$ regardless of $m$. However, $\Exp_\cA(T_G)$ (Lemma~\ref{ESS_TG}) has a second-order term which is proportional to $m*(K-m)$. For $m = 1,2,3$, $m*(K-m) = 5, 8, 9$ respectively. It may explain why the results for $m=2,3$ are very close, and is visibly different from $m = 1$.
%\end{remark} 

\subsubsection{Gap-intersection rule}
Second, we consider the case in which  the number of signals  is known to be  between 3 and 7 ($\cP = \cP_{\ell,u} = \cP_{3,7}$), and we can apply the \luName rule, defined in~\eqref{gap-intersection}. Due to the  symmetry  of the setup and Lemma~\ref{error_control_LU}, we set $a = b$ and $c = d = b+ \log(u)= b +\log(7)$. 

As before, we consider  $\alpha$'s ranging from $10^{-2}$ to $10^{-8}$. For each such $\alpha$,  we obtain the threshold $b$ such that  $\max_{\cA} \Pro_{\cA}(\cA \lesssim d_{GI}) =\alpha$, where the maximum is taken over $\mathcal{A} \in \cP_{\ell, u}$, and then compute the corresponding expected sample size  $\Exp_{\cA}[T_{GI}]$ for every  $\mathcal{A} \in \cP_{\ell, u}$.    In Fig.~\ref{fig:lu}  we plot $\Exp_{\cA}[T_{GI}]$ against    $|\log_{10}(\alpha)|$ for $|\cA|=3$ and $5$, since by symmetry $\Exp_{\cA}[T_{GI}]$ is the same for $|\cA| = k$ and $10-k$, and the results for $|\cA| = 4$ and $5$ were too close. 
This is also   evident from Table~\ref{tab:MI}, where we present the numerical results for $b= 10$.   
In the same graph  we also plot the first-order asymptotic approximation to the optimal performance obtained in Theorem~\ref{asymptotic_optimality_MI}, which in this case is $|\log \alpha|/D= 8|\log \alpha|$.  By  Theorem~\ref{asymptotic_optimality_MI}, we know that the ratio of $\Exp_{\cA}[T_{GI}]$ over $8|\log \alpha|$ goes to 1 as $\alpha \rightarrow 0$, which is corroborated in  Fig.~\ref{fig:lu_n}. 
%In Table~\ref{tab:MI} we also present the actual numerical results for $c = 10$.  

%\begin{remark} 
%From theorem~\ref{asymptotic_optimality_MI}, $\Exp_{\cA}(T_{GI})$ are asymptotically equivalent to $8|\log(\alpha)|$ for $|\cA| = 2$ and $3$. By Lemma~\ref{ESS_LU},  the second-order term is proportional to $4$ when $|\cA| = 2$, and $6 $ when $|\cA| = 3$. It may explain the difference for $|\cA| = 2$ and $3$.
%\end{remark}

\subsubsection{Intersection versus incomplete rule}
Finally, we consider  the case of no prior information ($\cP = \cP_{0,10}$), in which we compare the  intersection rule  with the incomplete rule. This is a special case of   the previous setup  with $\ell=0$ and $u=K$, but now 
%due to the symmetry of the two Due to the symmetry of the setup,  
the expected sample size (for both schemes)  is the same for every subset of signals $\cA$, which allows us to plot only one curve for each scheme  in Fig.~\ref{fig:no_prior} (\textit{non-conservative} critical value is used). In the same graph we also 
plot  the first-order approximation to the optimal performance,  $|\log \alpha|/D= 8|\log \alpha|$, whereas in Fig.~\ref{fig:no_prior_n}. we plot the corresponding normalized version.

Further, in Fig.~\ref{fig:int_upper}  we present the expected sample size of the intersection rule when its threshold is given by the  explicit expression in~~\eqref{thres_LU}, and compare it with the corresponding  expected sample size that is obtained with the sharp threshold, which is computed via simulation. 

%Further, we consider the conservative critical value determined by~\eqref{thres_LU} for the intersection rule, and compare its performance with that of non-conservative threshold in Fig.~\ref{fig:int_upper}.

%\ys{We  vary  $a$ and  record the corresponding values of  $\Exp_{\cA}(T_{I})$ and  $\Exp_{\cA}(T_{max})$, as well as the familywise errors,  $\max \MFI_{\mathcal{A}}(\delta_{I})$ and $\max \MFII_{\mathcal{A}}(\delta_{max})$ respectively, in which the  maximum is taken over every $\mathcal{A} \subset [K]$.  May be brief here}

\subsection{Results}
There are a number of conclusions that  can be drawn from the presented graphs. First of all,  from Fig.~\ref{fig:m} it follows that the gap rule performs the best  when there are exactly $m=1$ or $9$ signals, whereas  its performance is quite similar for $m = 3,4,5$. As we mentioned before, this can be explained by the fact that the second  term in the right-hand side in \eqref{upper_bound_TG2} grows with  $m(K-m)$. 

Second, from Fig.~\ref{fig:lu} we can see that  the  \luName rule performs better in the boundary cases that  there are exactly 3 or 7 signals than in the  case of  5 signals, which can be explained by the second order term in~\eqref{GIsecondorder}.

Third, from Fig.~\ref{fig:no_prior} we can see that  the intersection rule is always better than the incomplete rule, although  they share the same prior information. %This is quite expected, as the incomplete rule, defined by~\eqref{incomplete_rule}, does not continue  sampling all streams until the end.
 
 %This is  quite intuitive, since the  incomplete rule stops sampling  each stream as soon as it crosses the interval $(-a,b)$,  whereas the intersection rule keeps sampling all streams until they are all simultaneously out end. 

Fourth, from the graphs in the second column of  Fig.~\ref{fig:three}  we can see that all curves approach 1, as expected from our asymptotic results; however, the convergence is relatively slow. This is reasonable,  as we do not divide the expected sample sizes by the optimal performance in each case, but with a strict lower bound on it instead.  % Another possible reason for slow convergence is the second-order term, which is roughly proportional to the square root of the first-order term. \end{remark} 

Fifth, comparing Fig.~\ref{fig:m} with Fig.~\ref{fig:lu} and~\ref{fig:no_prior},  we verify that knowledge of the exact number of signals roughly halves the required expected sample size in comparison to the case that we only have a lower and an upper bound on the number of signals.

Finally, we see by Tables~\ref{tab:gap} and~\ref{tab:MI} that  the upper bounds~\eqref{bound} and~\eqref{bound2} on the error probabilities
% suffice for our asymptotic theory, they 
are  very crude. Nevertheless, from Fig.~\ref{fig:gap_upper} and~\ref{fig:int_upper}, we observe that  using these  conservative thresholds in the  design of the proposed procedures leads to  bounded performance loss as the error probabilities go to 0 relative to the case of sharp thresholds, obtained via Monte Carlo simulation. This is expected, as the expected sample size scales with the logarithm of the error probabilities.

\begin{figure}%[htbp]
\subfloat[Gap rule: $\cP = \cP_m$, $m = 1,3,5$]{
\includegraphics[width=0.45\linewidth]{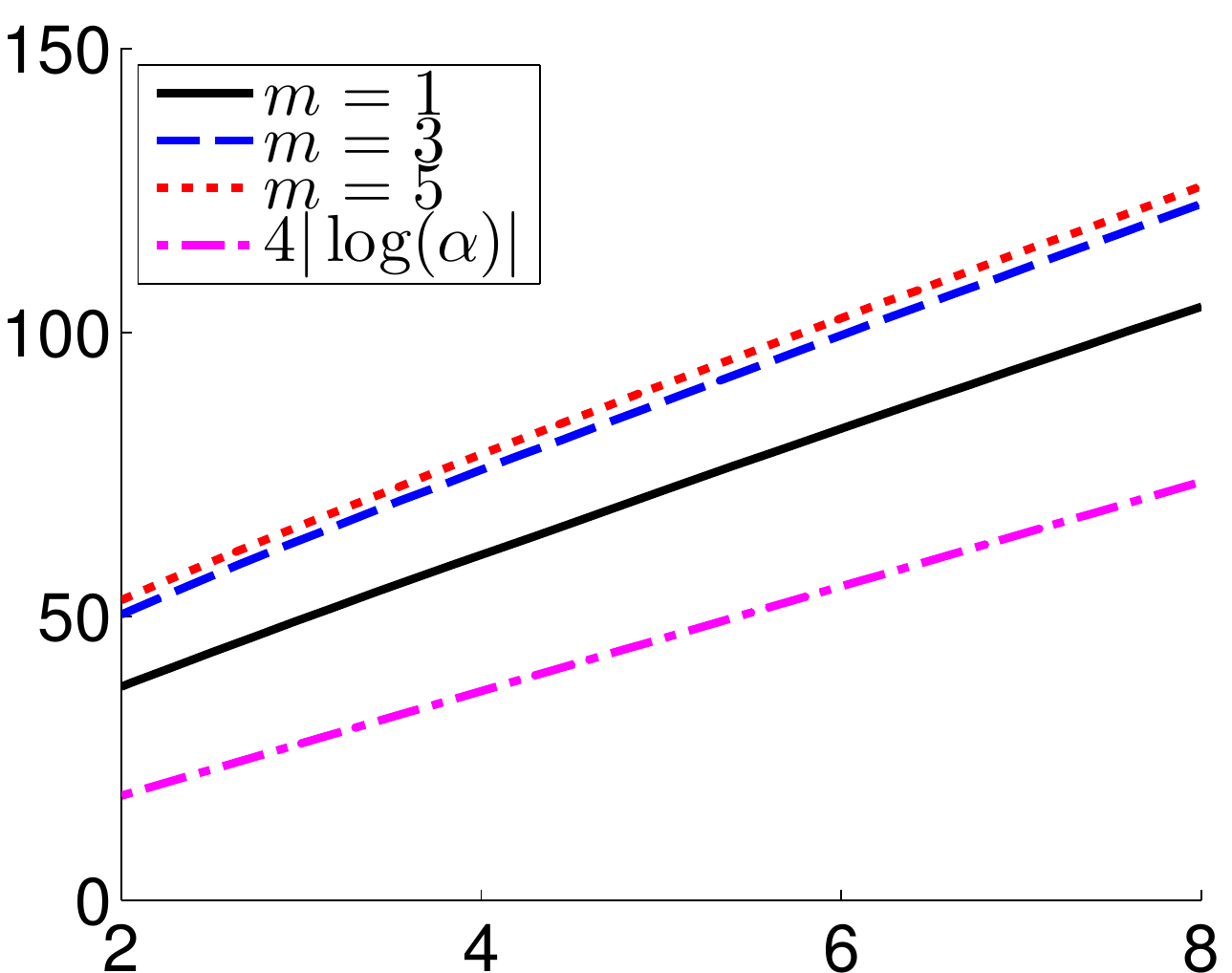}  
\label{fig:m}
}\hspace{0.2in}
\subfloat[Normalized by $4|\log(\alpha)|$]{
\includegraphics[width=0.44\linewidth]{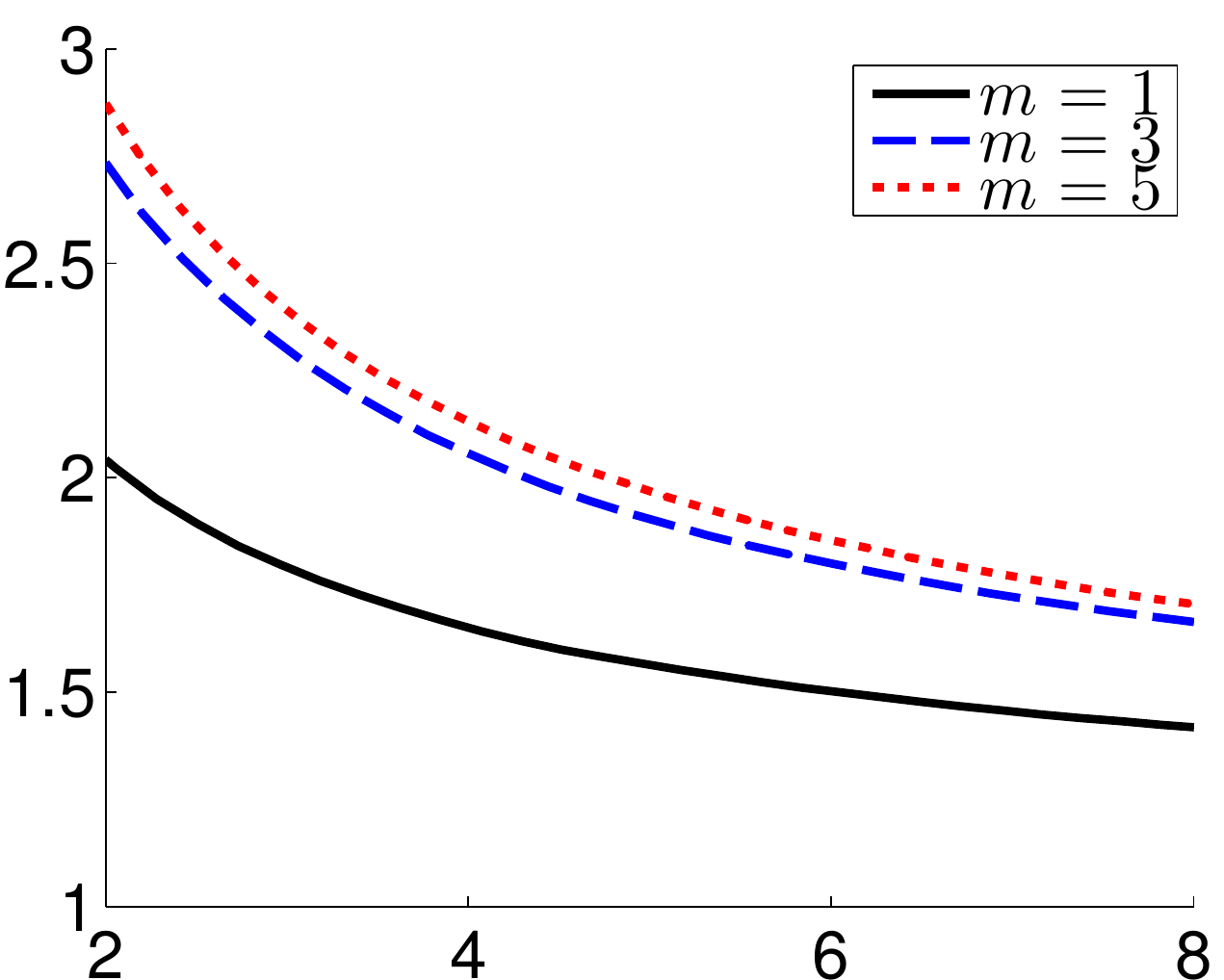}  
\label{fig:m_n}
} \\
\subfloat[Gap-intersection rule: $\cP = \cP_{3,7}$]{
\includegraphics[width=0.44\linewidth]{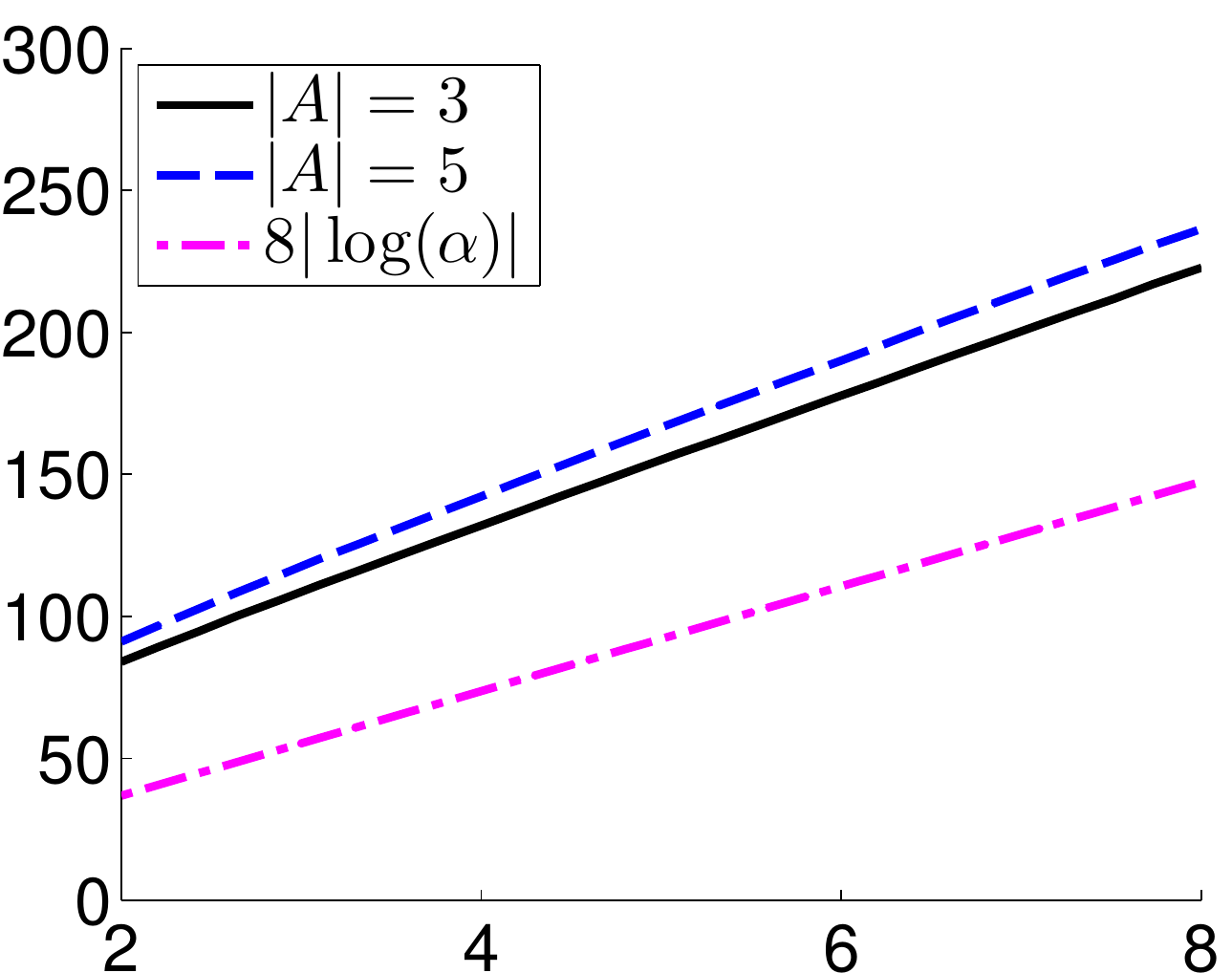}  
\label{fig:lu}
}\hspace{0.2in}
\subfloat[Normalized by $8|\log(\alpha)|$]{
\includegraphics[width=0.44\linewidth]{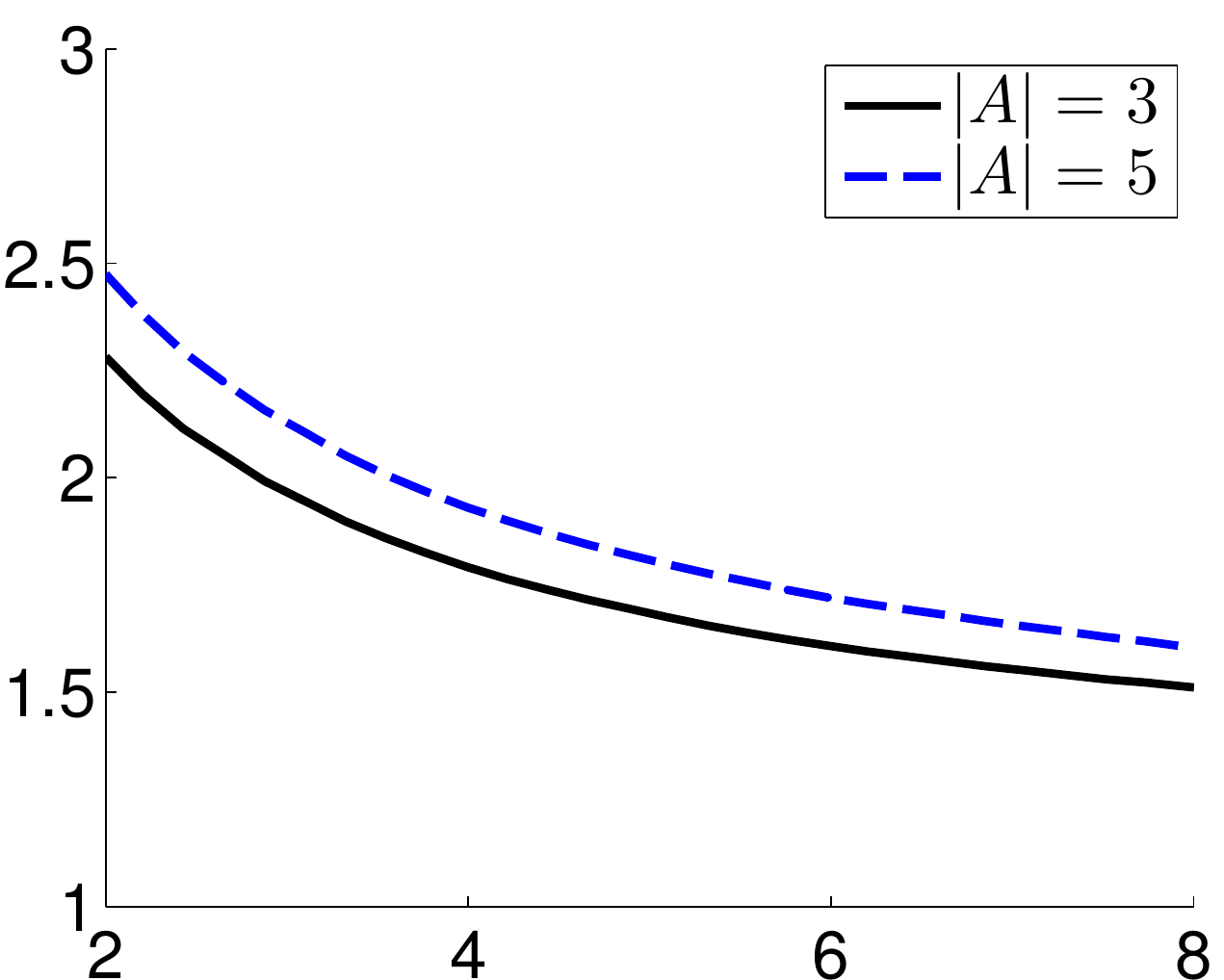}  
\label{fig:lu_n}
}\\
\subfloat[Intersection vs Incomplete: $\cP = \cP_{0,10}$]{
\includegraphics[width=0.44\linewidth]{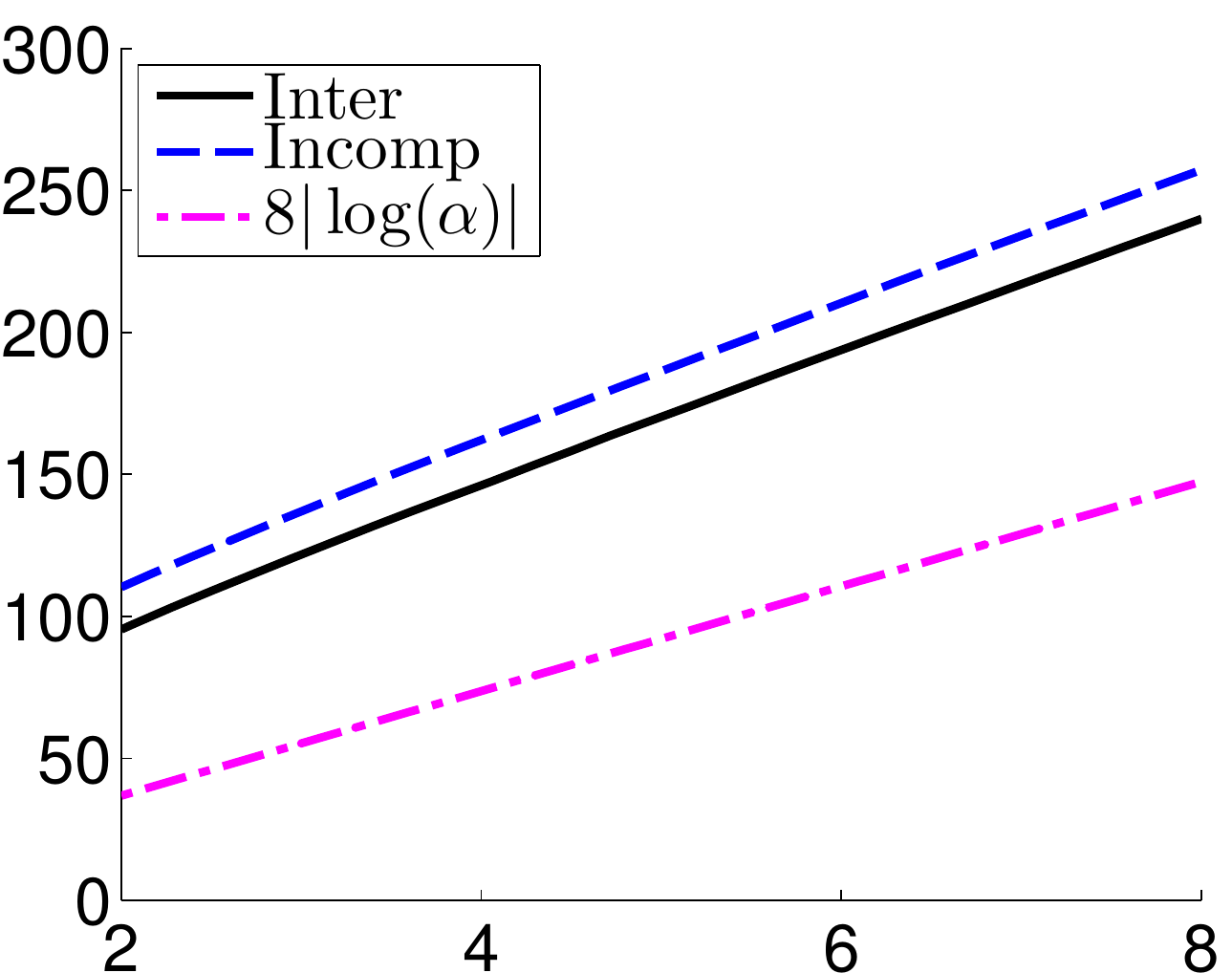}  
\label{fig:no_prior}
}\hspace{0.2in}
\subfloat[Normalized by $8|\log(\alpha)|$]{
\includegraphics[width=0.44\linewidth]{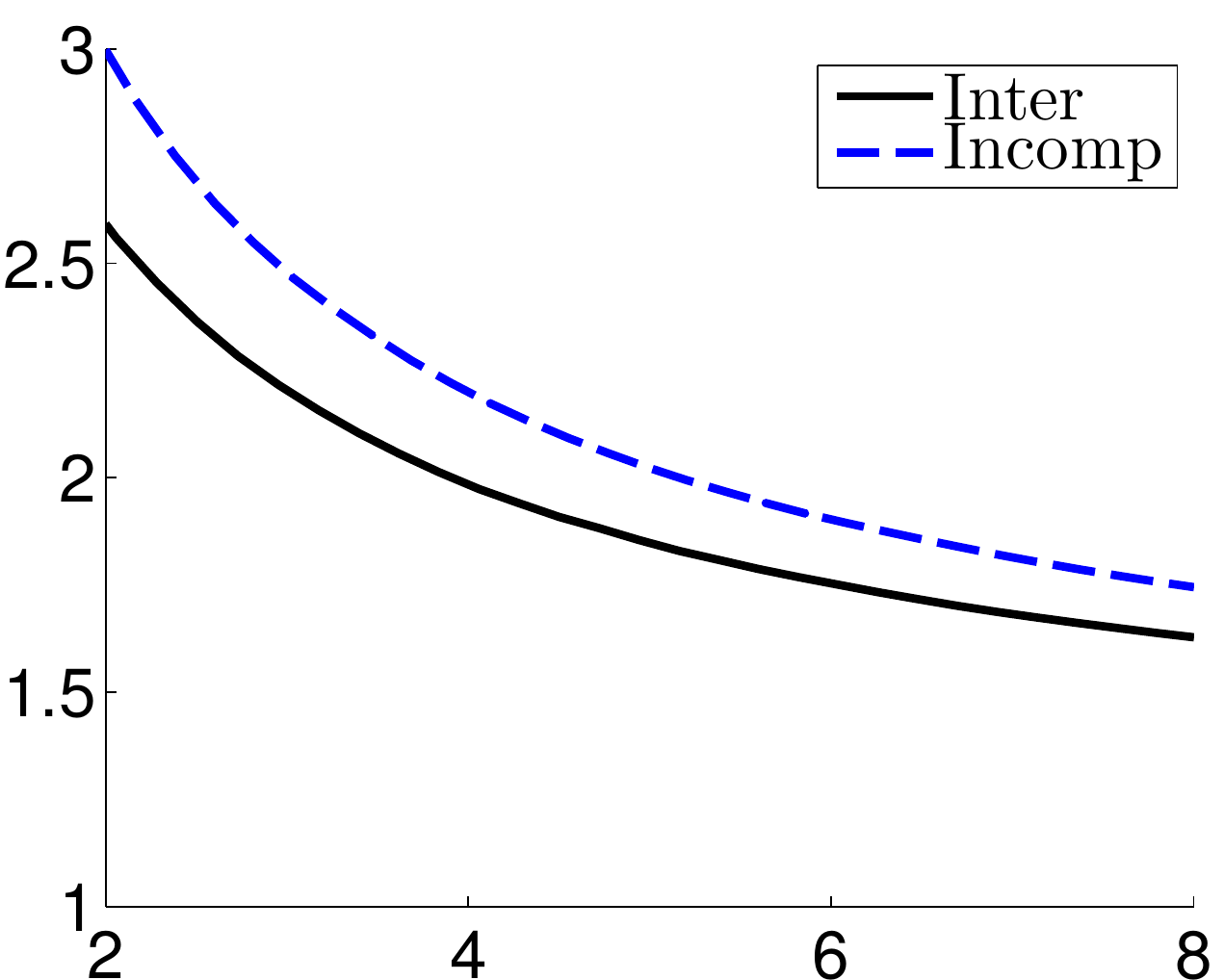}  
\label{fig:no_prior_n}
}
\captionsetup{justification=raggedright, singlelinecheck=false}
 \caption{ The x-axis in all graphs  is  $|\log_{10}(\alpha)|$.  In the first column, the y-axis denotes the expected sample size under $\Pro_\cA$ that is required in order to control the \textit{maximal} familywise type I error probability \textit{exactly} at level $\alpha$. 
 % $\MFI$ at level $\alpha$, i.e. $\alpha = \max_{\mathcal{A} \in \cP} \textup{FWER-I}_{\mathcal{A}}(\delta)$. 
 The dash-dot lines in each plot correspond to the first-order approximation, which is also a lower bound, to the optimal expected sample size  
 %(given by the lower bound in Theorem~\ref{lower_bound})  
for the class $\Delta_{\alpha,\alpha}(\cP)$; due to symmetry, this lower bound does not depend on $|\cA|$ in each setup. In the second column, we normalize each curve by its corresponding lower bound.
%In (a), the gap rule is applied: the solid line is for $m = 1$, and the dashed line for $m = 2$. In (b), the \luName  rule is applied: the solid line is for $|\cA|= 2$, and the dashed line for $|\cA| = 3$.   In (c),  the solid line is for the intersection rule, and dashed line for the incomplete rule.  In (d), the solid (resp. dashed) line is the solid(resp. dashed) line in (a) divided by the lower bound.  Similarly, (e) and (f) are the normalized version of (b) and (c). 
}
\label{fig:three}
 % \vspace{-1em}
\end{figure}

\begin{table}[htbp]\label{tab}
\caption{The standard error of the estimate is included in the parenthesis. The upper bound is on the error control given by~\eqref{bound} for the first table and by~\eqref{bound2} for the second.}
% \ys{The comparison with the actual error shows the importance of using simulation to determine the critical value.}}
\subfloat[$\cP = \cP_m$. $(T_G,d_G)$ with $c = 10$. ]{
    \begin{tabular}{|c|c|c|c|}
    \hline
    $m$     &$\Pro_{\cA}(d_{G} \neq \cA)$  & $\Exp_{\cA}(T_{G})$  & Upper bound  \\
    \hline
    1     & 5.041E-05 (3.101E-07) & 64.071 (0.157)  & 4.086E-4   \\
    \hline
    3     & 6.034E-05 (5.343E-07) & 78.386 (0.157)  & 9.534E-4 \\
    \hline
    5     & 6.145E-05 (5.859E-07) &  81.070 (0.156) & 1.135E-3 \\
    \hline
    \end{tabular}%
    \label{tab:gap}
}\\
\subfloat[$\cP = \cP_{3,7}$. $(T_{GI},d_{GI})$ with $b  = 10$.]{
%Here, $\max_{\mathcal{A} \in \cP} \textup{FWER-I}_{\mathcal{A}}(\delta_{GI}) = 8.333\textup{E}-06$.
    \begin{tabular}{|c|c|c|c|}
    \hline
    $|A|$   & $\Pro_{\cA}(\cA \lesssim d_{GI})$  & $\Exp_{\cA}(T_{GI})$ & Upper bound  \\
    \hline
    3     & 3.653E-05 (5.447E-07) &  142.173 (0.264) & 4.540E-04\\
    \hline
    4     & 3.144E-05 (2.189E-07) &  152.873 (0.264) & 4.281E-04\\
    \hline
    5     & 2.621E-05 (1.825E-07) &  152.895 (0.263) & 3.891E-04\\
    \hline
    7     & 3.104E-07 (1.340E-08) &  142.363 (0.270) & 2.724E-04\\
    \hline
    \end{tabular}%
    \label{tab:MI}
}
\end{table}

\begin{figure}%[htbp]
\subfloat[Gap rule: $\cP = \cP_3$]{
\includegraphics[width=0.45\linewidth]{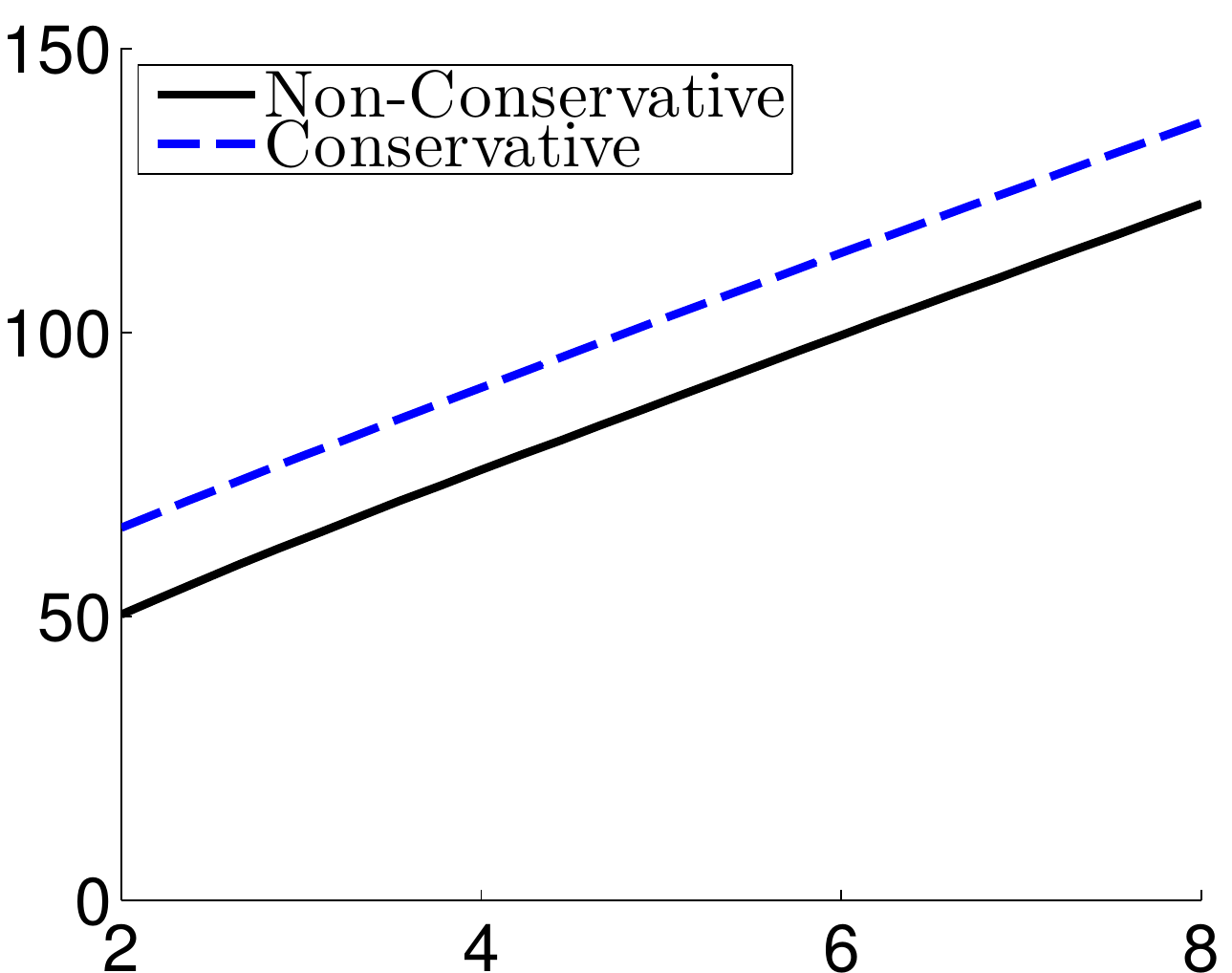}  
\label{fig:gap_upper}
}\hspace{0.2in}
\subfloat[Intersection rule: $\cP = \cP_{0,10}$]{
\includegraphics[width=0.44\linewidth]{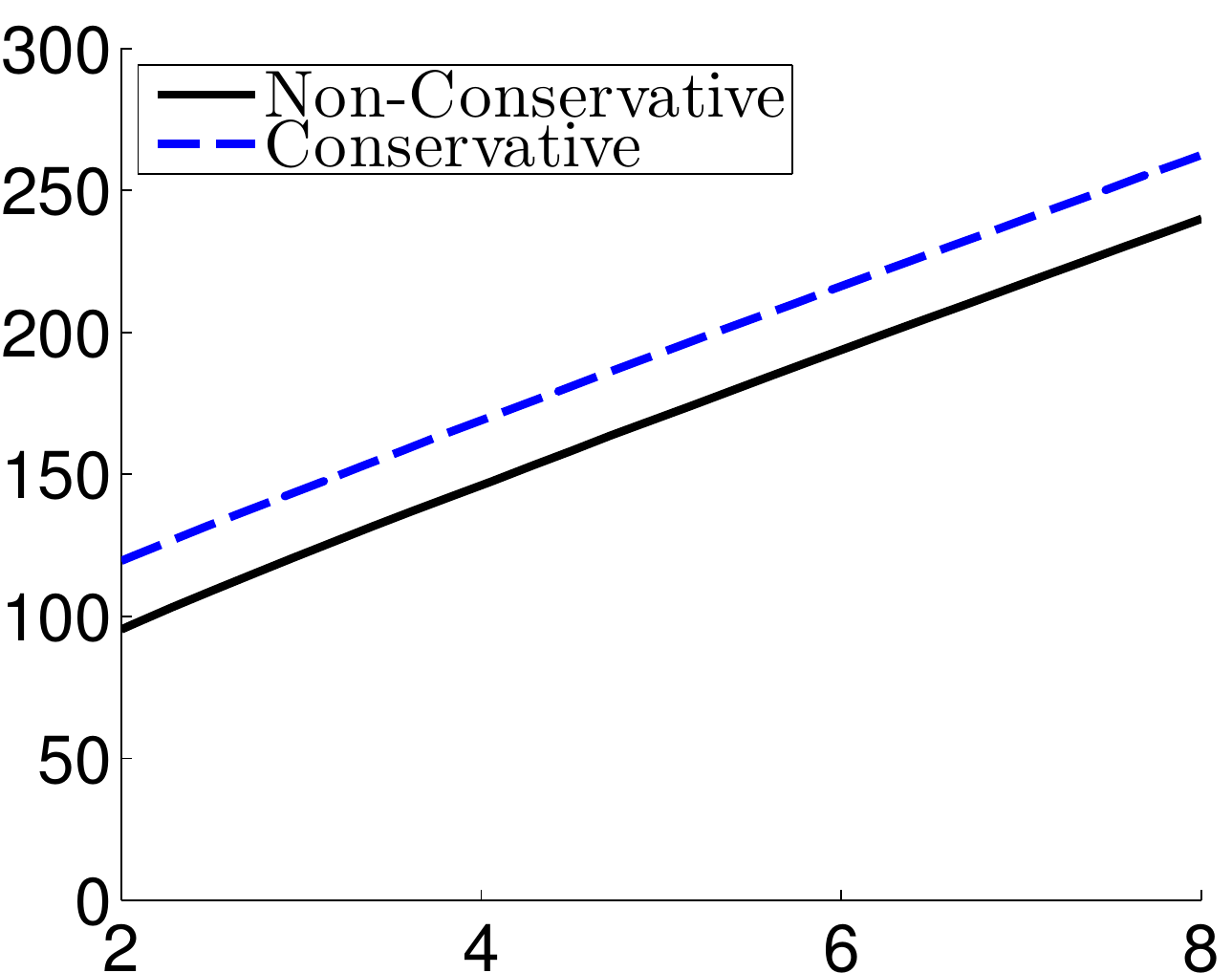}  
\label{fig:int_upper}
}
\caption{The x-axis is $|\log_{10}(\alpha)|$, where $\alpha$ is user-specified level. 
The y-axis is the expected sample size.
The dashed line uses the upper bound on the error probability to get conservative critical value, while the solid line uses the Monte Carlo approach to determine non-conservative threshold such that the \textit{maximal} familywise type I error is controlled \textit{exactly} at level $\alpha$.
}
\label{fig:upper}
\end{figure}

\section{Conclusions} \label{generalize}

We considered the problem of simultaneously testing multiple simple  null hypotheses, each of them against a simple alternative, in a sequential setup. That is, the data for each  testing problem are acquired sequentially and the goal is to stop sampling  as soon as possible, simultaneously in all streams, and make a correct  decision for each individual testing problem.  The main goal of this work was to  propose feasible, yet asymptotically optimal, procedures that  incorporate prior information on the number of signals (correct alternatives), and also to understand the potential gains in efficiency  by such prior information.

We studied this problem   under the assumption  that the  data streams for the various hypotheses are independent. Without any distributional assumptions on the data that are acquired in each stream,  we proposed procedures  that control the probabilities of at least one false positive and at least one false negative below arbitrary   user-specified levels. This was achieved in  two general  cases regarding the available prior information: when the exact  number of  signals is known in advance, and when we only have an upper and a lower bound for it. Furthermore, we proposed a Monte Carlo simulation method, based on importance sampling, that can facilitate the  specification of non-conservative critical values for the proposed multiple testing procedures in practice.  More importantly,  in the special case of i.i.d.  data in each stream, we were able to show that the proposed multiple testing procedures are asymptotically optimal, in the sense that they require the minimum possible expected sample size to a first-order asymptotic approximation as the error probabilities vanish at arbitrary rates. 
% as $\alpha, \beta \rightarrow 0$}. 

These  asymptotic optimality results have some interesting ramifications.
First of all, they imply that any refinements of the  proposed procedures,  for example using a more judicious choice of alpha-spending and beta-spending functions, cannot reduce the expected sample size  \textit{to a first-order} asymptotic  approximation. Second, they imply that bounds on the number of signals do not  improve the minimum possible expected sample size \textit{to a first-order  asymptotic approximation}, apart from a very special case.
% (asymmetric error probabilities, or  symmetric hypotheses, and only  when  the true number of signals is equal to either the lower or the upper bound.
%\ys{except for the boundary cases with asymmetric hypothesis or error control?}  
On the other hand,  knowledge of the \textit{exact} number of signals  does reduce the minimum possible expected sample size to a first order approximation, roughly by a factor of 2. These insights were corroborated by a simulation study, which however also revealed the limitations of a first-order asymptotic analysis and emphasized the importance of second-order terms.

To our knowledge, these are the first results on the asymptotic optimality of  multiple testing procedures, with or without prior information, that control the familywise error probabilities of both types. However, there are still some important open questions that remain to be addressed. Do the proposed procedures attain, in the i.i.d. setup, the optimal expected sample size to a \textit{second-order} asymptotic approximation as well? %, in which case there is little room for improvement? Second, 
Does the first-order asymptotic optimality property remain valid for more general, non-i.i.d. data in the streams? While we conjecture that the answer to both these questions is affirmative, we believe that the corresponding proofs require different  techniques from the ones we have used in the current paper.

There are also interesting generalizations  of the setup we considered in this paper. For example, it is interesting  to consider the sequential multiple testing problem when the goal is to  control generalized error rates, such as the false discovery rate~\citep{bartroff_arxiv}, instead of the more stringent  familywise error rates.  Another interesting direction is to allow the hypotheses in the streams to be specified up to an unknown parameter, or to consider a non-parametric setup similarly  to~\citet{li2014universal}. Finally, it is still an open problem to design asymptotically optimal multiple testing procedures that incorporate prior information on the number of signals when it is possible and desirable to stop sampling at different times in the various streams.

\appendix 
\section{Two lemmas} \label{appen}

\subsection{An information-theoretic inequality} \label{app_lower_bound_proof}

In the proof of  Theorem \ref{lower_bound} we use the  following, well-known,   information-theoretic  inequality, whose proof can be  found, e.g., in~\citet{tartakovsky2014sequential} (Chapter 3.2). 

\begin{lemma}\label{KL_bound}
Let $\Qro,\Pro$ be equivalent probability measures on a measurable space $(\Omega,\mathcal{G})$ and recall the function $\varphi$ defined in~\eqref{phi}. 
Then, for every $A \in \mathcal{G}$ we have
%$\{A_i\}_{i=1}^n \subset \mathcal{G}$ be  pairwise  disjoint events such that $\bigcup_{i=1}^n A_i = \Omega$. 
$$\Exp_{\Qro} \left[ \log \frac{d\Qro} {d \Pro} \right]  \geq \varphi \left( Q(A), \Pro(A^c) \right).$$ 
%\begin{equation} \label{LB1}
%D( Q || \Pro ) \equiv  \int_{\Omega} \log \left( \frac{dQ}{d\Pro} \right) \; dQ %\geq \sum_{i=1}^n Q(A_i) \log\left( \frac{Q(A_i)}{\Pro(A_i)}\right).
%\end{equation} 
%Moreover, for any $A \in \mathcal{G}$ we have 
%\begin{equation} \label{LB2}
%D( Q || \Pro )  \geq \varphi \left( Q(A), \Pro(A^c) \right),
%\end{equation} 

\end{lemma}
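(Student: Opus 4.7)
The plan is to first recognize that the right-hand side is the Kullback--Leibler divergence between two Bernoulli distributions. Setting $p := \Qro(A)$ and $q := \Pro(A) = 1-\Pro(A^c)$, direct expansion gives
\[
\varphi(p,1-q) = p\log\frac{p}{q} + (1-p)\log\frac{1-p}{1-q},
\]
which is exactly the KL divergence between $\mathrm{Ber}(p)$ and $\mathrm{Ber}(q)$. Thus Lemma~\ref{KL_bound} is the data-processing inequality applied to the coarsening $\omega \mapsto \mathbf{1}_A(\omega)$, which sends the pair $(\Qro,\Pro)$ on $(\Omega,\mathcal{G})$ to $(\mathrm{Ber}(\Qro(A)),\mathrm{Ber}(\Pro(A)))$ on $\{0,1\}$. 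Since processing can only decrease KL divergence, the inequality follows in principle.

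To carry this out directly, I would let $L := d\Qro/d\Pro$ and rewrite the left-hand side as $\Exp_{\Pro}[L\log L]$, then split the integral over the partition $\{A, A^c\}$. On each piece I would apply Jensen's inequality to the convex function $\phi(x)=x\log x$, using the conditional measures $\Pro(\cdot\mid A)$ and $\Pro(\cdot\mid A^c)$ as the reference probability measures. Concretely:
\begin{align*}
\int_A L\log L\, d\Pro &= \Pro(A)\,\Exp_{\Pro}[\phi(L)\mid A] \;\geq\; \Pro(A)\,\phi\!\left(\Exp_{\Pro}[L\mid A]\right) = \Qro(A)\log\frac{\Qro(A)}{\Pro(A)},\\
\int_{A^c} L\log L\, d\Pro &\geq \Qro(A^c)\log\frac{\Qro(A^c)}{\Pro(A^c)},
\end{align*}
where I have used $\Exp_{\Pro}[L\mid A] = \Qro(A)/\Pro(A)$. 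Adding these two inequalities and substituting $\Qro(A^c)=1-\Qro(A)$, $\Pro(A)=1-\Pro(A^c)$ recovers precisely $\varphi(\Qro(A),\Pro(A^c))$, completing the argument.

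There is no real obstacle here; the only subtlety is the degenerate cases $\Pro(A)\in\{0,1\}$, which need a moment's thought to avoid conditioning on a null event. However, the assumed equivalence $\Pro\sim\Qro$ ensures that $\Pro(A)=0$ forces $\Qro(A)=0$ (and likewise for $A^c$), so the standard convention $0\log 0 = 0$ trivializes the corresponding boundary term and the inequality collapses to the non-degenerate case. As this is a standard result, I would simply cite \citet{tartakovsky2014sequential} (Chapter 3.2) in the final write-up rather than reproducing these two lines.
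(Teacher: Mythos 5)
Your proof is correct. The identity $\varphi(\Qro(A),\Pro(A^c)) = \Qro(A)\log\frac{\Qro(A)}{\Pro(A)} + \Qro(A^c)\log\frac{\Qro(A^c)}{\Pro(A^c)}$ is right, the conditional-Jensen (equivalently, data-processing/log-sum) step on the partition $\{A,A^c\}$ is the standard argument, and your handling of the degenerate cases via mutual absolute continuity is fine. The paper itself gives no proof and simply cites \citet{tartakovsky2014sequential} (Chapter 3.2), where essentially this same argument appears, so your write-up just supplies the details the paper delegates to the reference.
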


%\begin{proof}
%This is a well known result, whose proof can be  found, e.g.,  in~\cite{tartakovsky2014sequential} (Chapter 3.2). 
%Inequality \eqref{LB2} is a special case of \eqref{LB2} where $n=2$.
%\end{proof}

%\begin{comment}
%
%\begin{lemma} \label{change_of_measure}
%Let $\Pro, \Qro$ be   locally equivalent probability measures on $(\Omega,\mathcal{G})$, $T$ be a stopping time relative to some filtration $\{\mathcal{G}_n\}_{n \in \mathbb{N}}$, and $\mathcal{G}_T$ the $\sigma$-field associated with $T$.  Then,
%almost surely on the event $\{T < \infty\}$, 
%$$
%\left. \frac{dP}{dQ} \right\vert_{\mathcal{G}_T} = L(T), \quad \text{where} \quad 
%L(n) = \left.\frac{dP}{dQ}\right\vert_{\mathcal{G}_n} , \quad n \in \mathbb{N}.
%$$
%\end{lemma}
%\begin{proof}
%The proof can be, for example, found in~\cite{woodroofe1982nonlinear} (Chapter 1).
%\end{proof}
%
%\end{comment}

\subsection{A lemma on multiple random walks}
For the proof of  Lemmas \ref{ESS_TG} and \ref{ESS_LU} we need an upper bound on the expectation of the first time that  multiple random walks, not necessarily independent, are simultaneously above given thresholds. We state here the corresponding  result in some generality. 
%hich will be used in the proof of  both lemmas. % obtaining upper bounds on the expected sample sizes of all sequential tests that we consider in this paper.  

Thus, let $L \geq 2$ and suppose that for each    $l \in [L]$ we have 
 a sequence of i.i.d. random variables, $\{\xi_n^l, n\in \bN\}$, such  that $\mu_l = \Exp [\xi_1^l] > 0$ and  $\text{Var} [\xi_1^l] < \infty$. For each    $l \in [L]$, let 
$$S_n^l = \sum_{i=1}^n \xi_i^l, \quad n \in \bN$$
be the corresponding random walk.  Here, \textit{no assumption is made on the dependence structure among these random walks}. For an arbitrary vector $(a_1,\ldots, a_L)$, consider the  stopping time 
%let $T$ be the first time that all random walks cross their corresponding thresholds simultaneously, i.e., 
\begin{equation*}
T = \inf\left\{ n \geq 1\;:\; S_n^l \geq a_l \text{ for every } l \in [L]\right\}.
\end{equation*}
The following lemma provides an upper bound on the expected value of $T$.
The proof is  identical to the one in Theorem 2 in~\citet{mei2008asymptotic}; thus we omit it. We stress that although the theorem in the reference assumes independent random walks, exactly the same proof applies to the case of dependent random  walks.

\begin{lemma}  \label{aux}
As $a_1,\ldots,a_L \to \infty$,
\begin{equation*}
\Exp[T] \leq  \max_{l \in [L]} \left( \frac{a_l}{\mu_l}  \right) +
 O\left(\sum_{l \in [L]}  \sqrt{  \frac{a_l}{\mu_l} } \right)  
\leq \max_{l \in [L]} \left( \frac{a_l}{\mu_l}   \right) +
 O \left( L  \sqrt{\max_{l \in [L] }  \{ a_l\}} \right)  .
\end{equation*}
\end{lemma}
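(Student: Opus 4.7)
\textbf{Proof plan for Lemma \ref{aux}.} The plan is to reduce the multivariate stopping time $T$ to single-random-walk first-passage estimates combined only with a union bound, which is precisely why independence across the $L$ walks is never actually needed: every joint-law input collapses under the union bound.

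First, I would write the expectation as a tail sum and apply the union bound on $\{T > n\} = \bigcup_{l} \{S_n^l < a_l\}$:
$$
\Exp[T] \;=\; \sum_{n=0}^{\infty} \Pro(T > n) \;\leq\; \sum_{n=0}^{\infty} \min\!\Bigl\{\,1,\; \sum_{l=1}^{L} \Pro(S_n^l < a_l)\Bigr\}.
$$
This is the only step that touches the joint law, and any dependence would yield the same upper bound. Thus the remaining analysis depends only on the marginals of each $\{S_n^l\}_n$.

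Next, set $N := \lceil \max_l (a_l/\mu_l) \rceil$ and split the outer sum at $n = N$. For $n \leq N$, use the trivial bound $\Pro(T > n) \leq 1$, contributing at most $N = \max_l(a_l/\mu_l) + O(1)$ to $\Exp[T]$. For $n > N$ one has $n\mu_l > a_l$ for every $l$, so $\{S_n^l < a_l\}$ is a deviation event. The key single-walk estimate to prove is, for each fixed $l$,
$$
\sum_{n > N} \Pro(S_n^l < a_l) \;=\; O\!\bigl(\sqrt{a_l/\mu_l}\bigr) \qquad \text{as } a_l \to \infty,
$$
which reflects the heuristic that the random walk needs only about $\sqrt{a_l/\mu_l}$ additional steps beyond its mean-hitting time $a_l/\mu_l$ before $S_n^l$ is reliably above $a_l$. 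Summing this bound over $l$ and combining with the $n \leq N$ piece gives
$$
\Exp[T] \;\leq\; \max_l(a_l/\mu_l) \;+\; O\!\Bigl(\sum_{l=1}^{L} \sqrt{a_l/\mu_l}\Bigr),
$$
which is the first claimed inequality; the second follows by bounding each $\sqrt{a_l/\mu_l}$ by $\sqrt{\max_l a_l}/\sqrt{\min_l \mu_l}$ and absorbing the $\mu_l$'s into the big-$O$.

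The main obstacle is justifying the tail-sum estimate of order $\sqrt{a_l/\mu_l}$ under only a second-moment assumption. A naive Chebyshev bound $\Pro(S_n^l < a_l) \leq n\sigma_l^2/(n\mu_l - a_l)^2$ produces, after setting $n = a_l/\mu_l + k$, a tail sum of order $\sum_{k \geq 1} (a_l/\mu_l + k)/k^2$, whose $\sum_k 1/k$ part diverges. The clean workaround, which is executed carefully in the proof of Theorem 2 of \citet{mei2008asymptotic}, is to invoke classical nonlinear renewal-theoretic results for the first-passage time $\tau_l := \inf\{n : S_n^l \geq a_l\}$, namely $\Exp[\tau_l] = a_l/\mu_l + O(1)$ and $\mathrm{Var}(\tau_l) = O(a_l/\mu_l^3)$ (both of which hold under finite second moment), and then to relate the below-threshold tail sum $\sum_{n > N} \Pro(S_n^l < a_l)$ to the distribution of $\tau_l$ by a duality identity. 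Since that bookkeeping touches only one random walk at a time and aggregates them via the union bound, it carries over verbatim to the dependent setting considered here.
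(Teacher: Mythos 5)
Your proposal reaches the stated bound, but by a genuinely different route from the paper's. The paper simply defers to Theorem 2 of \citet{mei2008asymptotic}, whose argument is pathwise: with $N^l := \inf\{n\ge 1: S_n^l \ge a_l\}$ and $\rho^l$ the last time after $N^l$ at which the $l$-th walk sits below $S^l_{N^l}$, one has $T \le \max_{l} N^l + \sum_{l} \rho^l + 1$; then $\Exp[\max_l N^l] \le \max_l (a_l/\mu_l) + \sum_l \Exp\,\vert N^l - a_l/\mu_l\vert$, and Cauchy--Schwarz combined with the renewal facts $\Exp[N^l] = a_l/\mu_l + O(1)$ and ${\sf Var}[N^l] = O(a_l)$ gives the $O(\sqrt{a_l})$ terms, while each $\rho^l$ has finite mean under the second-moment assumption. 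You instead bound $\Exp[T] = \sum_n \Pro(T>n)$ by a union bound and reduce to single-walk tail sums. Both reductions use only marginal renewal estimates, so both make transparent why dependence across the $L$ walks is irrelevant; yours trades the pathwise maximal decomposition for a tail-sum computation, at the cost of needing one extra estimate that the pathwise argument gets for free.

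That extra estimate is the step you label a ``duality identity,'' and as stated it does not work: classical duality gives $\Pro(\tau_l > n) = \Pro(\max_{m\le n} S_m^l < a_l) \le \Pro(S_n^l < a_l)$, i.e.\ the inequality points the wrong way, because the walk can dip back below $a_l$ after first crossing it. The clean completion within your scheme is to write $\{S_n^l < a_l\} \subseteq \{\tau_l > n\} \cup \{\tau_l \le n,\; S_n^l - S^l_{\tau_l} < 0\}$. The first piece contributes $\sum_{n>N}\Pro(\tau_l > n) \le \Exp[(\tau_l - N)^+] \le \Exp\vert \tau_l - a_l/\mu_l\vert = O(\sqrt{a_l})$ by Cauchy--Schwarz and the first-passage moments you quoted (Lorden's overshoot bound for the mean, ${\sf Var}(\tau_l) = O(a_l)$). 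The second piece, summed over all $n$, is at most $\sum_{j\ge 1}\Pro(\tilde S_j^l < 0)$ by the strong Markov property at $\tau_l$, where $\tilde S^l$ is a fresh copy of the walk, and this series converges precisely because the increments have finite second moment; this is the same finite-mean last-exit fact that powers Mei's pathwise proof. Finally, note a small misattribution: Mei's Theorem 2 does not carry out your tail-sum bookkeeping ``verbatim''---it is the pathwise argument sketched above---so the reference supplies the renewal-theoretic ingredients, not the missing step itself.
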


\begin{comment}
\begin{proof}
For each $k \in [L]$, define the following random times:
\begin{align*}
N^k &= \inf\{n \geq 1\;:\; \lambda^k(n) \geq a_k \},  \quad \\
\tau^k(N^k) &= \sup\{n > N^k \;:\; \lambda^k(n) <  \lambda^k(N_k)\}.
\end{align*}
Then, it is clear that 
\begin{align*}
 T  &\leq \max_{k \in [L]} \left( N^k + \tau^k(N^k) + 1 \right) \\
 &\leq  \max_{k \in [L]}  N^k +  \sum_{k \in L} \tau^k(N^k) + 1 .
 \end{align*}
 Since $N^k$ is a stopping time, $\tau^k(N^k)$ has the same distribution as $\tau^k(0)$, and 
% By theorem 1 in~\cite{janson1986moments},  under assumption~\ref{assumption_on_random_walks}, 
$\Exp\; \tau^k(0) < \infty$. %Thus, $\Exp \;\tau^k(N^k)$ is a finite constant not depending on $(a_1,...,a_K)$. 
Thus, $$\Exp(T) \leq \Exp \left(\max_{k \in [K]}  N^k \right) + O(L) .$$
Now, it is clear that 
\begin{align*}
\Exp \left(\max_{k \in [L]}  N^k \right) &=  \max_{k \in [L]} \left( \frac{a_k}{\mu_k} \right) +  \Exp \left(\max_{k \in [L]}  (N^k - \frac{a_k}{\mu_k}) \right) \\
&\leq  \max_{k \in [L]} \left( \frac{a_k}{\mu_k} \right) + \sum_{k \in [L]} \Exp \left[ |N^k - \frac{a_k}{\mu_k}| \right] .
\end{align*}
and by the Cauchy-Schartz  inequality we  have:
$$\Exp \left[ |N^k - \frac{a_k}{\mu_k}| \right] \leq 
\sqrt{ \Exp \left[ \left( N^k - \frac{a_k}{\mu_k} \right)^2 \right]  }
= \sqrt{ \text{Var}[N^k] + \left(\Exp[N^k] -\frac{a_k}{\mu_k}  \right)^2}
$$
From  renewal theory it is well known that as $a_k \to \infty$, 
% (theorem 9.1~\cite{gut2009stopped}), as $a_k \to \infty$, 
$$\Exp[N^k] = \frac{a_k}{\mu_k} + o(\sqrt{a_k}), \quad  \Var[N^k] = O(a_k),$$
which implies that 
$$
\Exp \left[ |N^k - \frac{a_k}{\mu_k}| \right] \leq O(\sqrt{a_k}),
$$
and completes the proof. 
\end{proof}

\end{comment}

\bibliographystyle{imsart-nameyear}
\bibliography{EJS_arxiv}

\end{document}